\newcommand{\R}{\mathbb{R}}
\newcommand{\Z}{\mathbb{Z}}
\newcommand{\B}{\mathbb{B}}
\newtheorem{thm}{Theorem}[section]
\newtheorem{lem}[thm]{Lemma}
\numberwithin{equation}{section}
\def \x{\times}
\def \de{\partial}
\def \e{\epsilon}
\def \m{\mathbb}
\def \<{\langle}
\def \>{\rangle}
\def\and{\quad\text{ and }\quad}
\begin{document}



\title{Optimal Control Problems with Time Delays (Preliminary Version)}


\author{A. Boccia\thanks{ Dept.~of Mechanical Engineering, Massachusetts Institute of Technology, 77 Massachusetts Avenue, Cambridge, MA, USA.(e-mail: aboccia@mit.edu).}  \hspace{0.01 in} and R. B. Vinter\thanks{EEE Dept., Imperial College London,
Exhibition Road, London SW7 2BT, UK (e-mail: r.vinter@imperial.ac.uk).
}}
\maketitle

\begin{abstract}
\noindent
This paper provides necessary conditions of optimality for optimal control problems with time delays in both state and control variables. Different versions of the necessary conditions cover fixed end-time problems and, under additional hypotheses, free end-time problems. The conditions improve on previous available conditions in a number of respects. They can be regarded as the first generalized Pontryagin Maximum Principle for fully non-smooth optimal control problems, involving delays in state and control variables, only special cases of which have previously been derived. Even when the data is smooth, the conditions advance the existing theory. For example, we provide a new `two-sided' generalized transversality condition, associated with the optimal end-time, which gives more information about the optimal end-time than the `one-sided' condition in the earlier literature.
 But there are improvements 
in other respects, relating to the treatment of initial data, specifying past histories of the state and control, and to the unrestrictive nature of the hypotheses under which the necessary conditions are derived.

\end{abstract}
\vspace{0.05 in}

\noindent
{\bf Key Words:} Optimal Control, Maximum Principle, Time Delay Systems,  Nonsmooth Analysis.
\vspace{0.01 in} 

\noindent
{\bf AMS Classification:} 49J21, 49J52, 49J53.

\section{Introduction}
This paper concerns optimal control problems, in which we seek to minimize a cost
$$
 J(x(.),u(.))\,=\,g(x(S),x(T))
\\
\,+ \,\int_{[S,T]} L ( t,\{x(t-h_{k})\}_{k=0}^{N}, \{u(t-h_{k})\}_{k=0}^{N})dt\,,
$$ 
 over control functions $u(.)$ such that $u(t)\in U(t)$, a.e., and  state trajectories $x(.)$ satisfying an end-point constraint  $(x(S),x(T))$ $\in C$ and a dynamic constraint, formulated as a controlled delay differential equation:
\begin{equation}
\label{intro_1}
  \dot x(t)=f ( t,\{x(t-h_{k})\}_{k=0}^{N}, \{u(t-h_{k})\}_{k=0}^{N}), \; \mbox{ a.e. } t \in [S,T]\,.
 \end{equation}
Here, $[S,T]$ is a given time interval, $h_{0} < h_{1}< \ldots< h_{N}$ are given numbers such that $h_{0}=0$,  $f(.\,.):[S,T] \times \R^{(1+N)\times n}\times \R^{(1+N)\times m} \rightarrow \R^{n}$ and $L(.\,.):[S,T] \times \R^{(1+N)\times n}\times \R^{(1+N)\times m} \rightarrow \R$ are  given functions and $U(t)$, $S \leq t \leq T$, and $C$ are given sets. We write $h:=h_{N}$.  Notice that, according to this formulation, delays may occur in both $x$ and $u$ variables. 
\ \\

\noindent 
Under suitable hypotheses on the function $f(.\,.)$, we can unambiguously associate a state trajectory $x(.):[S,T]\rightarrow \R^{n}$ with a given control function $u(.):[S,T]\to \R^m$ (in some appropriate function class) and initial data in the form of (a.e.) specified values $d^{x}(s)$ and $d^{u}(s)$, $S-h\leq s <  S$,   of the $x$ variable and the $u$ variable, respectively,  on the `delay interval' $[S-h, S]$, and the initial value $x_{0}$ of the $x$ variable. The state trajectory $x(.)$ is the absolutely continuous solution to (\ref{intro_1}), consistent with the initial data, in the sense that, for each $t \in [S,T]$:
\begin{eqnarray}
\label{state}
&&
\hspace{-0.3 in} x(t) = x_{0}+
 \int_{[S,t]} f(s, ( t,\{ x(s-h_{k} )\}, \{u(s-h_{k})\};\{d(t-h_{k})\})ds\,.
\end{eqnarray}
%
Here, and throughout the paper, $\{ x(s-h_{k} )\}_{k=0}^{N}$ is written simply as $\{ x(s-h_{k} )\}$, etc.
The function $f(.\,.; \{d(s-h_{k})\})$  appearing in (\ref{state})
is 
\begin{eqnarray*}
&&f( t,x_{0},\ldots, x_{N},u_{0},\ldots, u_{N};d_{0},\ldots,d_{N})
\\
&& \qquad :=  
f
\left( t,\left\{ 
\begin{array}{ll}
x_{k} & \mbox{if }t-h_{k} \geq S
\\
 d^{x}_{k} & \mbox{if }t-h_{k} < S
\end{array}
\right\}_{k=0}^{N},
\left\{ \begin{array}{ll}
u_{k} & \mbox{if }t-h_{k} \geq S
\\
 d^{u}_{k} & \mbox{if }t-h_{k} < S
\end{array}
\right\}_{k=0}^{N}
\right)\,,
\end{eqnarray*}
 describes how the initial segments of the state and control variables, gathered together as a single function $d(.)=(d^{x}(.),d^{u}(.))$ on the time interval $[S-h,S]$, affect the evolution of the state trajectory $x(.)$. 
Note that the right side of (\ref{state}) makes sense
because 
$x(t-h_{k})$ and  
  $u(t-h_{k})$ need to be evaluated only when $t-h_k \in [S,T]$ and the vector $d(t-h_{k})$ needs to be evaluated only when $t-h_k \in [S-h,S)$.
This formulation of the dynamic constraint and cost covers, as special cases, situations in which there are only time delays in the states, only time delays in the controls, or when the delay times for controls and states differ, since, if the delay times differ, we can take $\{h_{1}, \ldots, h_{N}\}$ to comprise all the time delays (in states and controls). 
\ \\

\noindent
This paper provides necessary conditions of optimality for a `feasible  process' $(\bar x(.),\bar u(.))$ (i.e. a state trajectory/control policy pair satisfying the constraints of the problem) and accompanying initial data to be a minimizer, in the form of a generalized Pontryagin Maximum Principle (PMP). Necessary conditions for optimal control problems with time delays, of this nature, go back to the beginnings of optimal control theory (see, e.g., \cite{banks}). Early derivations of necessary conditions (see, e.g., \cite{halanay}, \cite{huang},  \cite{warga1} and the extensive references in \cite{banks} and \cite{KT}) were typically based on the application of abstract multiplier rules (due to Hestenes, Neustadt, Warga, Gamkrelidze and others), which are specially adapted to the stucture of optimal control problems interpreted as optimization problems over function spaces, and which take account of density theorems relating to `original' and `relaxed' state trajectories, through consideration of Gamkrelidze's `quasiconvex families of functions' (or by other means).
In common with the classical (delay-free conditions), these necessary conditions assert the existence of a `co-state' trajectory $p(.)$ satisfying a co-state equation and transversality conditions, and Weierstrass condition telling us that a Hamiltonian-type function, evaluated along $(\bar x(.),p(.))$ is maximized over possible values of the control variable at $\bar u(.)$. A distinctive feature of these conditions is that the co-state equation is  an `advance functional differential equation', namely
\begin{eqnarray}
\label{adjoint}
\hspace{-0.5 in}
&&
-\dot p(t) \,=\, 
\sum_{k=0}^{N} 
p(t+h_{k})\cdot
\nabla_{x_{k}}f(t+h_{k},
\{\bar x(t-h_{j}+h_{k})\}_{j=0}^{N}\,,
\\
\nonumber
&&
\hspace {2.2 in} 
\{\bar u(t-h_{j}+h_{k})\}_{j=0}^{N}; \{\bar d(t-h_{j}+h_{k})\}_{j=0}^{N} )
\quad\mbox{ a.e. }
\end{eqnarray}
%
($\nabla_{x_{k}}$ refers to partial differentiation with respect to the $k$'th delayed state argument.)
\ \\

\noindent
We derive necessary conditions of this nature, for general, possibly non-commensurate, delays in both state and control variables. We also provide generalizations in which the initial data (specifying the past histories of $x(.)$ and $u(.)$), are included in the cost, and in which the terminal time $T$ is a choice variable (`free-time' problems). They reduce to Clarke's nonsmooth PMP \cite{clarkeMP} when when there are no time delays. Some special cases of these results were announced in \cite{Boccia}. The novel aspects of our work are as follows: 
\ \\

\noindent
{\it Nonsmoothness}: We provide the first set of  necessary conditions, in the form of a generalized Pontryagin Maximum Principle, for `fully' nonsmooth problems (i.e. problems in which the only regularity hypothesis on the data w.r.t. the state variable is `Lipschitz continuity') involving delays in states and controls. They resemble the classical necessary conditions for `smooth' problems except that, in the costate relation (\ref{adjoint}) classical derivatives are replaced by set-valued subdifferentials of nonsmooth analysis. Two earlier papers \cite{CW}, \cite{cw:91} provide necessary conditions for nonsmooth optimal control problems with time delays in the state alone. The difference is that, in these papers, the dynamic constraint is modelled as a differential inclusion, and the relation for the costate arc (combined with the Weierstrass condition)  is a generalization of Clarke's Hamiltonian
 inclusion condition with `advanced' arguments. As observed in \cite[Section 1,]{cw:91}, necessary conditions expressed in terms of the Hamiltonian inclusion imply the nonsmooth Maximum Principle only for problems having special structure and not `fully' nonsmooth problems, as in this paper. Furthermore, the methods of \cite{CW} and \cite{cw:91} cannot be adapted to cover problems with time delays in the control, because it is not possible to express a controlled delay differential equation (with delays in the control) as a delay differential inclusion (which can take account only of delays in the state). \cite{cw:91} allows both distributed and discrete delays (in the state variable), whereas we allow only discrete delays (in both state and control variables). Necessary conditions for optimal control problems involving differential inclusions are also provided in \cite{cernea} and \cite{Mord} for fixed time optimal control problems involving a single time delay in the state. 
 We mention that Warga \cite{warga2}  showed that a broad class of optimal control problems involving delays and/or functional differential equations, distinct from the problems considered in this paper,  can be fitted to an abstract framework within which nonsmooth necessary conditions can be derived; \cite{warga2} requires a special `additively-coupled' structure for the control delay dependence.
%
\ \\

\noindent
{\it Free End-Time:} 
 This paper treats free end-time optimal control problems with time delays. 
In a delay-free context, optimal control problems with free end-time can be reformulated as standard optimal control problems on a fixed time interval, as a result of  a transformation of the time variable. Optimality conditions for free end-time problems can be obtained from those for fixed end-time problems by applying fixed time conditions to the reformulated problem. For no time delays then, the derivation of free end-time conditions is straightforward. For optimal control problems with time delays, the reduction of free end-time problems to fixed end-time problems, in order to derive optimality conditions, cannot generally be achieved. This is because the time transformation, whose object is  to fix the end-time, also generates a non-standard optimal control problem with time delays, since the time delays now depend on the state variable. We follow a different approach, which is new in a time delays context, based on a perturbation of the end-time. We thereby derive a modified transversality condition, which supplies additional information about the optimal end-time, expressed in terms of the `essential value' of a  maximized Hamiltonian-like function, introduced in \cite{CV1}. Our free-time transversality condition, which is `two-sided', is stronger than the `one-sided' condition in  \cite{KT}.
\ \\

\noindent
{\it The Weierstrass Condition:} A significant feature of the  necessary conditions provided in this paper is that they incorporate an `integral' form of the Weierstrass condition for problems involving general, non-commensurate time delays in state and control. For optimal control problems involving delays only in the state, the integral and pointwise forms of the Weierstrass condition are equivalent. But when we allow non-commensurate control delays, the integral form of the condition (appearing in this paper) is stronger than the pointwise form. While integral forms have been proved in special cases (`additively-coupled' non-commensurate time delays in the control (see, e.g. \cite{warga2}) or commensurate control delays\cite{KT}, only pointwise forms (or weak `differentiated' forms), of the Weierstrass condition are provided for general time delays in the control,  elsewhere in the literature. An exception is the important, but apparently overlooked, work of Warga and Zhu \cite{warga3}. For controlled functional differential equations with non-additively-coupled, non-commensurate control delays, these authors establish the requisite `quasi-convexity' properties required for the  derivation of the integral form of the condition, though they explore their implications to the theory of necessary conditions only in a special case.  Ideas in \cite{warga3} play a key role in the derivation of the integral  condition in this paper.
\ \\

\noindent
{\it Initial Data:} In this paper, the `initial data' function $d(t)=(d^{x}(t),d^{u}(t))$, $S-h\leq t \leq S$, specifying  the past history of state variable (the $d^{x}(.)$ component) and control variable (the $d^{u}(.)$ component) is regarded as a choice variable, which is required to satisfy 
$$
d(t) \in D(t)\mbox{ a.e. } t \in [S-h,S]
$$
and is taken account of in the cost by the integral cost term
`$\,
+ \int_{S-h}^{S} \Lambda(t,d(t))dt
\,$'.
\ \\

\noindent
The multifunction $D(.)$ and the integrand $\Lambda(.,.)$ are  required to satisfy  merely weak measurablility hypotheses and the component of the necessary conditions relating the optimal choice of initial data takes the form of a `strong' Maximum Principle. 
Optimality conditions relating to the initial data to be found in earlier work provide less information (in the case of \cite{CW}) and are derived under much stronger hypotheses.
In \cite{cw:91}, which concerns only state delays, it is assumed that the integral cost term is a Lipschitz function of  $d^{x}(.)$ (w.r.t. the sup norm) and $D(t)$ is required to be closed for each $t$. The relevant component of the necessary conditions is a less informative `weak' Weierstrass condition governing the initial data. 
\cite{KT} provides a `strong' Weierstrass condition (in integrated form) for the initial data, but under stronger hypotheses: $D(t)$ is must be a closed, convex product set and the control delays are assumed  commensurate.
\ \\

 \noindent 
Consider the important special case of a single delay or, more generally, commensurate delays (i.e. all delays are integer multiples of a single positive number). For fixed time problems, all the necessary conditions of this paper can be simply derived, using a transformation technique widely attributed to Guinn (\cite{Guinn}, \cite{maurer}), but which is, in fact, due to Warga \cite{warga0}. This transformation converts an optimal control problem with commensurate delays (in state and control) to a delay-free problem, to which the standard PMP is applicable. (Note that 
free end-time problems  with commensurate delays cannot be reduced to delay-free problems in this way because, when the end-time is free, the transformed problem does not have a suitable structure for application of delay-free necessary conditions.)
%
 \ \\

\noindent
{\it Notation}: 
The Euclidean norm of a vector $x\in \m R^n$ is $|x|$.  $\m B$ indicates the closed unit ball in $\m R^n$. The distance function $d_{A}(.): \R^{n} \rightarrow \R$ of a non-empty set $A\subset\m R^n$ is defined as 
$$
d_{A}(x)\;:=\; \inf\{|x-y| \,:\, y \in A \},\qquad \mbox{for } x \in \R^{n}\,.
$$
The convex hull of the set $A$ is written co$\, A$. Let $I\subset\m R$. The indicator function of the set $I$ is written $\chi_{I}(t):=\{1\mbox{ if }t\in I \mbox{ and }0\mbox{ otherwise} \}$. Given a multifunction $F(.): \R^{n} \leadsto \R^{k}$, we denote by Gr$\, F(.)$ the graph of $F(.)$, namely the set $\{(x,v) \in \R^{n+k}\,|\, v \in F(x) \}$. Given real numbers $a$ and $b$, $a\vee b := \max \{a,b\}$ and $a \wedge b := \min \{a,b\}$.
 \ \\
 
 \noindent
$W^{1,1}(\,[a,b]\,;\,\m R^n\, )$ denotes the space of absolutely continuous  functions $x:[a,b]\to\m R^n$, with norm 
$$
\|x\|_{W^{1,1}}:=|x(a)|+\int_a^b|\dot x(t)|\,dt\;.
 $$ 
%
We make use of several constructs from nonsmooth analysis, described in detail, for example, in \cite{Vinter} or \cite{ClarkeLed}: given a closed set $E\subset\m R^n$ and $x\in E$,  the proximal normal cone of $E$ at $x$ is
$$
N_E^P(x):= \{
\zeta \in \R^{n}: \exists \,\epsilon>0 \mbox{ {\it and} }M>0 \mbox{ {\it s.t.} } \zeta \cdot (y-x) \leq M|y-x|^{2} \mbox{ {\it for all} } y \in x+ \epsilon \B
\}.
$$
The limiting normal cone at $x$ is
$$
N_E(x):=\{\lim_{i\to\infty}\zeta_i:\zeta_i\in N^P_E(x_i)\mbox{ {\it and} }x_i\in E \mbox{ {\it for all } }i, \mbox{ {\it and} } x_i\to x\}\,.
$$
If $E$ is convex, these two normal cones coincide with the normal of cone of convex analysis.

%
\ \\
\noindent
Given a lower semicontinuous function $f(.):\m R^n\to \m R\cup\{+\infty\}$ and a point $ x\in \mbox{dom }f(.) \,:=\, \{ x\in \m R^n\,|\ f(x) < +\infty\}$, the proximal subdifferential of $f(.)$ at $x$ is the set 
$$
\de_Pf(x):=
\left\lbrace
\begin{array}{l}
\zeta\in\m R^n:\exists \;\sigma>0\text{ and } \e>0 \mbox{ such that, for all } \, y\in x+\e\m B\,,\\ 
\hspace{0.6 in} \<\zeta,y-x\> \leq f(y)-f(x) +\sigma|y-x|^2
\end{array}
\right\rbrace\;.
$$
The limiting subdifferential of $f(.)$ at $x$ is 
$$
\de f(x):=\{\lim_{i\to\infty}\zeta_i:\zeta_i\in \de_Pf(x_i), x_i\to x \mbox{ and } f(x_i)\to f(x)\}\,.
$$
%
The partial limiting subdifferential $\partial_{x_{i}}f(\bar x)$ w.r.t. $x_{i}$ at $\bar x=(\bar x_{0}, \ldots, \bar x_{N})$ is  the limiting subdifferential with respect to the $x_{i}$ variable at $\bar x_i$ when the other variables are fixed. The projected limiting subdifferential w.r.t. $x_{i}$ of $f(.):\R^{n}\rightarrow \R$ at $\bar x=(\bar x_{0}, \ldots, \bar x_{N})$, written 
$$
\tilde \partial_{x_{i}} f(\bar x) := \Pi_{x_{i}} \partial f(\bar x)\,,
$$
is the projection of the limiting subdifferential of $f$(.) at $\bar x$ onto the $i$'th coordinate. The partial and projected limiting subdifferentials coincide with the classical partial derivative, when $f(.)$ is continuously differentiable near $\bar x$, but can differ for Lipschitz functions.

\vspace{0.1 in}

\noindent
Given an essentially bounded function $h(.):(a,b) \rightarrow \R$ and a point $T \in (a,b)$,  the {\it essential value} of $h(.)$ at $T$ is the closed interval
$$
\underset{t \rightarrow T}{\mbox{ess}}\;h(t) \;:=\;\left[
\underset{\epsilon \downarrow 0}{\mbox{lim}}
 \;
 \underset{T-\epsilon \leq t \leq T+\epsilon }{\mbox{ess inf}}\;h(t),\;\;
 \underset{\epsilon \downarrow 0}{\mbox{lim}}\underset{T-\epsilon \leq t \leq T+\epsilon }{\mbox{ess sup}}\;h(t)
 \right].
$$



\section{Necessary Conditions for Fixed End-Time Problems}\label{sectionNC}

We consider the following optimal control problem:
$$
(P)
\left\{
\begin{array}{l}
\hspace{0.2 in}\mbox{Minimize } \; 
\displaystyle{J(x(.),u(.),d(.)):=
g(x(S),x(T))+\int_{[S-h,S]} \Lambda(t,d(t))\,dt}
\\
\hspace{0.2 in}+\displaystyle{ \int_{[S,T]} L ( t,\{ x(s-h_{k} )\}, \{u(s-h_{k})\};\{d(t-h_{k})\})  dt
}
\\
\hspace{0.2 in} \mbox{over  $x(.) \in W^{1,1} ([S,T];\R^{n})$ and measurable functions}
\\ 
\hspace{01.0 in}\mbox{ 
$u(.):[S,T] \rightarrow \R^{m} $,  $d(.) =(d^{x},d^{u})(.):[S-h,S] \rightarrow \R^{n}\times \R^{m}$, }
\\
\noindent
\hspace{0.2 in} \mbox{such that}
\\
\\
\hspace{0.2 in} \dot x(t)= f ( t,\{ x(s-h_{k} )\}, \{u(s-h_{k})\};\{d(t-h_{k})\})
\mbox{ a.e. }t\in [S,T],
\\
\\
\hspace{0.2 in}u(t)\in U(t)\mbox{ a.e. }t\in[S,T],
\\
\\
\hspace{0.2 in} d(t) \in D(t) \mbox{ a.e. }t\in[S-h,S],
\\
\\
\hspace{0.2 in}(x(S),x(T))\in C\;.
\end{array}
\right.
$$
Here, and below, expressions such as $\{x(t-h_{k})\}$should always be interpreted as $\{x(t-h_{k}\}_{k=0}^{N}$. The index  $k$ will be reserved for such expressions, and the values of $k$ will always run from $0$ to $N$. We write  $h:=h_{N}$.
\ \\

\noindent
The data comprises an interval $[S,T]$, real numbers $h_{0}, \ldots, h_{N}$ such that  $h_{0}=0 <\ldots < h_{N}$, functions
 $g(.,.): \R^{n} \times \R^{n} \rightarrow \R$, $\Lambda(.,.,.):[S-h,S] \times\R^{n}\times \R^{m}\rightarrow \R$, 
 $f(.,.,.):[S,T] \times\R^{(1+N)\times n} \times \R^{(1+N)\times m} \rightarrow \R^{n}$ and $L(.,.,.):[S,T]\x \R^{(1+N)\times n} \times \R^{(1+N)\times m} \rightarrow \R$, multifunctions $U(.):[S,T]\leadsto \R^{m}$ and $D(.):[S-h,S]\leadsto \R^{n+m}$ and a set $C\subset \R^{n} \times \R^{n}$. 
\ \\ 
 
\noindent
A {\it feasible process} is a $3$-tuple $(x(.),u(.), d(.))$, in which $x(.):[S,T]\rightarrow \R^{n}$,  $u(.):[S,T]\rightarrow \R^{m}$ and $d(.):[S-h,S]\rightarrow \R^{n+m}$ are functions satisfying the constraints in ($P$), and for which $t\rightarrow \Lambda(t,d(t))$ in integrable on $[S-h,S]$ and
$$
t\rightarrow L( t,\{ x(s-h_{k} )\}, \{u(s-h_{k})\};\{d(t-h_{k})\})
$$
 is integrable on $[S,T]$.
We say that a feasible process $(\bar x(.),\bar u(.),\bar d(.))$ is a {\it $L^{\infty}$} local minimizer if there exists $\epsilon >0$ such that
$$
J(x(.),u(.),d(.))\,\geq\, J(\bar{x}(.),\bar{u}(.),\bar d (.))
$$
for any feasible process $(x(.), u(.),d(.))$ satisfying
$\|x(.)-\bar x(.) \|_{L^{\infty}([S,T]; \R^{n})} \leq \e $.
\ \\

\noindent
We shall invoke
the following hypotheses,
in which $\tilde f(t,\{x_{k}\},\{u_{k}\}):=
(f,L)(t,\{x_{k}\}, \{u_{k}\})$ and $(\bar x(.),\bar u(.),\bar d(.))$ is a given feasible process.  For some $\e>0$:
\begin{itemize}
\item[(H1):] $g(.,.)$ is Lipschitz continuous on  $(\bar x(S),\bar x(T)) + \epsilon \B$ and
$C$ is a closed subset of $\R^{2n}$. 
\item[(H2):] For every $z \in \R^{(1+N)\times n}$, the function $\tilde f(.,z,.)$ is ${\cal L}([S,T])\times {\cal B}$ measurable, the set Gr$\,U(.)$  is
${\cal L}([S,T])\times {\cal B}$ measurable and the set Gr$\,D(.)$ is
${\cal L} ([S-h,S])\times {\cal B}$ measurable. Here, for a given interval $I$, ${\cal L}(I)$ denotes
Lebesgue subsets of $\R$,  ${\cal B}$ denotes the Borel sets of a Euclidean space, and ${\cal L}(I)\times {\cal B}$ denotes the product $\sigma$-algebra.
\item[(H3):]
There exists a function a Borel measurable function $k(.,(.\,.), (.\,.)): [S,T] \times \R^{n \times (N+1)}\times \R^{m \times (N+1)}$
such that 
 $t \rightarrow k(t,\{\bar u(t-h_{k})\},\{\bar d(t-h_{k})\})$  is integrable and  
 $$
 \mbox{$ \tilde f(t,., \{u_{k}\};\{d_{k})$ is $k(t,\{u_{k}\},\{d_{k}\})$-Lipschitz on  $\{\bar x(t-h_{k})\}+ \epsilon 
 \B^{n \times (N+1)}$ }
 $$
for  all $\{u_{k}\} \in U(t-h_{0}) \times\ldots \times U(t-h_{N})$,  $\{d_{k}\} \in D(t-h_{0}) \times\ldots \times D(t-h_{N})$  a.e. $t \in [S,T]$.
\end{itemize}

\noindent
Fix $x(.):[S,T]\rightarrow \R^{n}, u (.):[S,T]\rightarrow \R^{m}$ , $d (.):[S-h,S]\rightarrow \R^{n+m}$, $\lambda \geq 0$ and $p(.): [S,T] \rightarrow \R^{n} $.  
Define, for $t \in [S,T]$, ${\bf u} \in\R^{m}$ and ${\bf d} \in \R^{m}$  
\begin{eqnarray}
\label{Hamiltonian}
&& \hspace{-0.2 in }{\cal H}_{\lambda}(t,{\bf u}, {\bf d}; x(.), u(.),d(.), p(.)) \, :=\,
(p(t+h_{0})\cdot f  - \lambda L)
\\
\nonumber
&&
(t+h_{0}, \{x(t+h_{0}-h_{k})\}),{\bf u}, u(t+h_{0}-h_{1}),\dots,
\\
\nonumber
&& 
\nonumber
\hspace{0.5 in}
  u(t+h_{0}-h_{N});{\bf d}, d(t+h_{0}-h_{1}),\dots,  d(t+h_{0}-h_{N}))\chi_{[S,T]}(t+h_{0})
\\
\nonumber
&& 
\hspace{0.6 in}+  \ldots + \ldots
\\
\nonumber
&&
+ \,(p(t+h_{N})\cdot f  - \lambda L)
\\
\nonumber
&&
\hspace{0.1 in}
(t+h_{N}, \{x(t+h_{N}-h_{k})\}), u(t+h_{N}-h_{0}),\dots,  
\\
\nonumber
&&
 \hspace{0.5 in}
u(t+h_{N}-h_{N-1}),{\bf u}; d(t+h_{N}-h_{0}),\dots,  d(t+h_{N}-h_{N-1}), {\bf d})\chi_{[S,T]}(t+h_{N}).
\end{eqnarray}

\begin{thm}\label{main}
\vspace{0.1 in}

\noindent
Let  $(\bar x(.),\bar u(.),\bar d(.)) $ be an $L^{\infty}$ local minimizer for (P).
Assume hypotheses (H1)-(H3) are satisfied for some $\epsilon >0$.
\ \\

\noindent
Then there exist  $p_{k}(.)\in W^{1,1}([S-h_{k},T];\R^{n})$, $k=0,\ldots, N$, and  $\lambda \geq 0$ such that
\begin{equation}
\label{p_comp}
\left\{
\begin{array}{ll}
\dot p_{k}(t)=0 &\mbox{ for } t \in [S-h_{k},S] 
\\
p_{k}(t)=0 &\mbox{ for } t \in [(T-h_{k})\vee S,T]  \,,
\end{array}
\right.
\end{equation}
for  $k=1,\ldots,N$, with the following properties, in which $p(.) \in W^{1,1}([S,T];\R^{n})$ is the function 
\begin{equation}
\label{p_eqn}
p(t):= \sum_{k=0}^{N} p_{k}(t) \mbox{ for } t \in [S,T]\,.
\end{equation}

\begin{itemize}
\item[(a):] (nontriviality)
$(\lambda,p(.))\,\not=\, 0$,
\item[(b):] (adjoint inclusion) 

\hspace{-0.3 in}$\{-\dot p_{0}(t-h_{k})\} \in
\mbox{co}\, \partial_{\{x_{k}\}}\,(p\cdot f -\lambda (L+ \Lambda))
(t,\{x_{k}\},\{ \bar u(t-h_{k}) \}; \{\bar d(t-h_{k})\})$

\hspace{3.0 in}$(\{x_{k}\}=  \{\bar x(t-h_{k})\})$,
  $\mbox{a.e. } t \in [S-h,T]$.
\item[(c):] (integral Weierstrass condition)
%
\begin{eqnarray}
\label{integral}
&&\int_{[S-h,T]}(p\cdot f-\lambda (L+ \Lambda))(t,\{\bar x(t-h_{k})\},\{u(t-h_{k})\};\{d (t- h_{k})\})dt \hspace{0.3 in}
\\
\nonumber
&&
\hspace{0.3 in}
\leq \, \int_{[S-h,T]}(p\cdot f-\lambda(L+ \Lambda))(t,\{\bar  x(t-h_{k})\},\{ \bar u(t-h_{k})\};\{\bar d (t- h_{k})\})dt  \hspace{0.6 in}
\end{eqnarray} 
 for any selectors $u(.)$ of $U(.)$ and $d(.)$ of $D(.)$ such that the integrand on the left side of (\ref{integral}) is integrable.

%
%
%
%
%
%
%
  \item[(d):] $\left(p(S),-p(T)\right) \in \lambda \partial g(\bar x(S),\bar x(T))+N_C(\bar x(S),\bar x(T))$. 
\end{itemize}
It follows from the definition (\ref{p_comp}) and (\ref{p_eqn}) of $p(.)$ and from condition (b) that $p(.)$ satisfies the `advance' functional differential inclusion:
\begin{itemize}
\item[(b$^{*}$):] $-\dot p(t) \in \sum_{j=0}^{N} \chi_{[S,T -h_{k}]}(t)
\, \mbox{co}\, \tilde{\partial}_{x_{j}}\; (p\cdot
f-\lambda L)
$

$\noindent
(t+h_{j},
\{x_{k}\},\{\bar u(t-h_{k}+h_{j})\}_{k}, \{\bar u(t-h_{k}+h_{j})\}; \{\bar d (t- h_{k}+ h_{j})\}$

\hspace{2.8 in} 
$(\{x_{k}\}=  \{\bar x(t-h_{k}+h_{j})\})
\hspace{0.2 in}  \mbox{ a.e. } t \in [S, T]$
\end{itemize}
in which $\tilde{\partial}_{x_{i}}$ denotes the projected  limiting subdifferential onto the  $i$'th delayed state coordinate.
\vspace{0.00 in}

\noindent
Condition (c) implies
\begin{itemize}
\item[(c$^{*}$):] (Pointwise Maximum Principle)

$
{\cal H}_{\lambda}(t,\bar u(t), \bar d(t) ; \bar x(.), \bar u(.), \bar d(.), p(.))= \underset{u \in U(t),\, d \in D(t)}{\max}
{\cal H}_{\lambda}(t,u,d; \bar x(.), \bar u(.), \bar d(.), p(.))
$
\end{itemize}
\hspace{4.4 in} a.e. $t \in [S-h,T]$\,, 
%
%
%
%
%
%
%
%
%
\end{thm}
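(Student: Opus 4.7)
The plan is a double approximation. First, I would mollify the data $(f,L,g)$ on an $\epsilon$-tube around the reference process, replacing them by $C^{1}$ functions which converge in a suitable Lipschitz sense and inherit (H3) with a common integrable bound; an Ekeland-type argument produces, for each small parameter, an approximate minimizer to which the smooth necessary conditions can be applied. Second, I would approximate the delays by commensurate ones: choose $\tau_{i}\downarrow 0$ and integers $m^{i}_{k}$ with $h^{i}_{k}:=m^{i}_{k}\tau_{i}\to h_{k}$. For fixed $(\epsilon,i)$ the problem has smooth data and commensurate delays, so Warga's block-stacking transformation (\cite{Guinn}, \cite{warga0}) converts it into a standard delay-free optimal control problem on the short interval $[S,S+\tau_{i}]$, with enlarged state and control tuples $X=(x^{0},\ldots,x^{M-1})$, $U=(u^{0},\ldots,u^{M-1})$, where $x^{j}(s)=x(s+j\tau_{i})$ and similarly for $u$.

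To this transformed problem I would apply the classical smooth Pontryagin Maximum Principle. Its costate, once un-stacked, yields components $p^{i,\epsilon}_{k}(\cdot)$ on $[S-h^{i}_{k},T]$ with exactly the vanishing behaviour \eqref{p_comp}, a smooth version of (b$^{*}$), and a transversality of the form (d) (the normal cone at $(\bar x(S),\bar x(T))$ transfers unchanged under the block stacking). The Weierstrass condition for the transformed problem is a pointwise maximization over the whole tuple $U$ at each $s\in[S,S+\tau_{i}]$, and after un-stacking and re-summing over $j$ this reads exactly as \eqref{integral}; this is the reason (c) comes out in integrated form. Treating $d(\cdot)$ as an additional control on $[S-h,S]$ entering through the augmented dynamics delivers the initial-data component of (c). Passage to the limit $i\to\infty$ and then $\epsilon\downarrow 0$ is standard: (H3) and Gronwall give uniform $W^{1,1}$-bounds on $p^{i,\epsilon}_{k}$; weak compactness yields limits $p_{k}$ satisfying \eqref{p_comp}; the integral Weierstrass survives by dominated convergence once delayed trajectories are shown to converge in measure; and the classical partial gradient in the advance equation becomes the limiting subdifferential in (b) by the standard robustness of $\partial$ under $C^{1}$-Lipschitz approximation, combined with a Filippov-type measurable selection from the convex hull of the limit set. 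Nontriviality (a) is preserved via the normalization $\lambda+\sum_{k}\|p_{k}(\cdot)\|_{\infty}=1$, and (c$^{*}$) follows from (c) by a routine Lebesgue-point argument.

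The main obstacle is obtaining the \emph{integral} form of the Weierstrass condition for genuinely non-commensurate control delays: direct needle variations on the original problem yield only (c$^{*}$), because needle perturbations at distinct times $t_{1},t_{2}$ induce first-order effects at all later instants $t_{\ell}+h_{k}$ that do not superpose, so one cannot assemble pointwise inequalities into an integrated one. The commensurate-delay reduction circumvents this because in the transformed delay-free problem the control tuple $(u(s),u(s+\tau),\ldots)$ varies as a single $L^{\infty}$ function on $[S,S+\tau]$, whose ordinary Weierstrass inequality is already integrated with respect to the original time variable. This is essentially the quasi-convexity property exploited by Warga and Zhu \cite{warga3}; ensuring that it survives the passage to non-commensurate delays is the delicate step and is the place at which ideas from \cite{warga3} enter in an essential way.
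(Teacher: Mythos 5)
Your route (mollify the data, replace the delays $h_{k}$ by commensurate approximations $h^{i}_{k}=m^{i}_{k}\tau_{i}$, apply the Guinn--Warga block-stacking reduction and the classical smooth PMP, then pass to the limit) is genuinely different from the paper's, and it founders precisely at the step you yourself flag as ``delicate'' but do not carry out: the passage from commensurate to non-commensurate delays. Under (H2)--(H3) the data $f$ is only ${\cal L}\times{\cal B}$ \emph{measurable} in $(t,u)$. Perturbing the delays means the reference pair $(\bar x(.),\bar u(.))$ is no longer feasible for the perturbed dynamics, and the defect you must control is of the form $\int \bigl|f(t,\cdot,\{\bar u(t-h^{i}_{k})\})-f(t,\cdot,\{\bar u(t-h_{k})\})\bigr|\,dt$. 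Translation continuity gives $\bar u(\cdot-h^{i}_{k})\to\bar u(\cdot-h_{k})$ only in measure, and convergence in measure of the control argument does \emph{not} propagate through a merely Borel-measurable $f(t,x,\cdot)$ (take $f(u)=\chi_{\mathbb Q}(u)$ to see the failure). The same obstruction blocks the limit $i\to\infty$ in your integral Weierstrass inequality for an arbitrary selector $u(.)$ of $U(.)$: ``dominated convergence once delayed trajectories converge in measure'' handles the state arguments but not the delayed control arguments. So as written the argument only works under additional continuity of $f$ in $(t,u)$, which the theorem does not assume.

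The paper never perturbs the delays. For the integral Weierstrass condition it works with the true non-commensurate $h_{k}$ and builds the control variations directly: using Hurwitz's theorem on simultaneous Diophantine approximation (Lemma 5.2, taken from Warga--Zhu) it constructs periodic ``slicing'' sets $A_{j,\lambda}$ of density $\lambda$ such that, outside a small exceptional set ${\cal B}_{j,\lambda}$, the spike control $u_{j,\lambda}=\tilde u\chi_{A_{j,\lambda}}+u^{*}\chi_{A_{j,\lambda}^{c}}$ satisfies $\{u_{j,\lambda}(t-h_{k})\}=\{\tilde u(t-h_{k})\}$ or $\{u^{*}(t-h_{k})\}$ simultaneously in all delay slots; weak-$*$ convergence of the slicing indicators to $\lambda$ then yields the integrated inequality after dividing by $\lambda$ and letting $\lambda\downarrow 0$. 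This all takes place inside a penalized, decoupled problem (auxiliary arcs $y_{k}(.)$ with a quadratic penalty $i\sum_{k}\int k_{0}|y_{k}(t)-x(t-h_{k})|^{2}$, Ekeland's principle, and direct first-order expansions), which is also how the decomposition $p=\sum_{k}p_{k}$ and the adjoint inclusion (b) are obtained---rather than by un-stacking a delay-free costate. If you want to salvage your approach you must either add joint continuity hypotheses on $f$ in $(t,u)$, or replace the commensurate-delay approximation by the Hurwitz-type simultaneous approximation of the \emph{variation sets} as above; the latter is exactly the content of the Warga--Zhu idea you cite, and it is not a limit of block-stacked problems.
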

\vspace{0.1 in}
\noindent
A proof of Thm. \ref{main} is given in a later section.

\noindent
\vspace{0.1 in}

\noindent

\noindent
{\bf Comments}
\begin{itemize}
\item[(a):] The key conditions in Thm. \ref{main} are (a), (b$^{*}$), (c)-(e) involving the function $p(.)$.  When $(\{x_{k}\}) \rightarrow (f,L)(t,\{x_{k}\},\{u_{k}\}; \{d_{k}\})$ 
is $C^{1}$ near $(\{\bar x(t-h_{k})\})$ , they reduce to standard necessary conditions  expressed in terms of a costate function $p(.)$ satisfying the `advance' functional differential equation (\ref{adjoint}) of the Introduction.  
The condition (b), expressed in terms of the collection of $p_{k}(.)$'s in the sum decomposition (\ref{p_comp}) of $p(.)$ is a more precise condition than (b$^{*}$) in a non-smooth setting because the subdifferential $\partial_{\{x_{k}\} } (p\cdot f)$ is a subset, and in some cases a strict subset, of the product of projected partial subdiffererentials
$\tilde \partial_{x_{0}} (p\cdot f)\times \ldots \times \tilde \partial_{x_{N}} (p\cdot f)$.
\item[(b):] The integral Weierstrass condition (c) (for control functions and initial data functions), which allows simultaneous variation of the entries in all the control delay slots (provided they are all associated with some control function) is a stronger condition (when there are time delays in the control), than the pointwise condition (c)$^{*}$, expressed in terms of the Hamiltonian function (\ref{Hamiltonian}), which involves variations in the control slots only one at a time. Note that, elsewhere in the literature (see, e.g., \cite{warga2}), integral forms of the Weierstrass condition are given only under the addition hypothesis that the control delays are additively coupled. Apparently the only exception is \cite{warga3}, where, in a special setting, Warga and Zhu derive an integral form of the condition for non-additively coupled control delays. 
\item[(c):] The initial data segments $d^{x}(s)$ and $d^{u}(s)$, $s\in[S-h,S]$, for the $x$ and $u$ variables are treated in a more general way than in the previous literature. Here they are regarded as choice variables that are required to satisfy $d(t) =(d^{x},d^{u})(t) \in D(t)$ a.e.. $D(t)$ need not be closed or bounded, and is not necessarily a product set that captures, separately, constraints on initial state and initial control segments. The optimal initial segment $\bar d(.)$ is characterized by two versions of `strong' Weierstrass  condition (the pointwise condition ($d$) and integral condition (d$^{*}$)), which provide more information about $\bar d(.)$ than the `weak' condition in \cite{cw:91}  (when it is  applicable) expressed in terms of normal cones of $D(t)$. An integrated version of the `strong' Weierstrass condition on the initial data for the optimal state variable is included in the necessary conditions of \cite{KT}, but D(t) is required to be a closed convex set. 
\item[(d):] Our nonsmooth necessary conditions allow time delays in the control. They improve on earlier nonsmooth necessary conditions for time delay problems, which allow delays only in the state \cite{CW}, \cite{cw:91}, or require a separable structure for the control delay dependence \cite{warga2}. In common with \cite{warga3}, they improve on available necessary conditions  in \cite{KT} for smooth problems with delays in both state and control, because they do not require the control delays to be commensurate.
\end{itemize}

\section{Necessary Conditions for a Free End-Time Problem}
Consider next a related problem to ($P$) above, in which the end-time $T$ is free, and included in the choice variables, and in which there are no control delays.
$$
(P_{FT})
\left\{
\begin{array}{l}
\hspace{0.2 in}\mbox{Minimize } \; 
J(x(.),u(.),d(.),T):=
\tilde g(x(S),x(T),T)+\displaystyle{\int_{[S-h,S]} \Lambda(t,d(t))\,dt}
\\
\hspace{0.2 in}+\displaystyle{ \int_{[S,T]} 
L ( t,\{ x(t-h_{k})\},u(t);
\{d(t-h_{k})\})dt,}
\\
\\
\hspace{0.2 in} \mbox{over  $T \geq S$, $x(.) \in W^{1,1} ([S,T];\R^{n})$ and measurable functions}
\\ 
\\
\hspace{01.0 in}\mbox{ 
$u(.):[S,T] \rightarrow \R^{m} $,  $d(.):[S-h,S] \rightarrow \R^{n}$}
\\
\hspace{0.2 in}  \mbox{such that}
\\
\\
\hspace{0.2 in} \dot x(t)= f(
t,\{x(t-h_{k})\}, u(t); \{d(t-h_{k})\})
\\
\hspace{4.0 in}
\; \mbox{ a.e. }t\in [S,T],
\\
\hspace{0.2 in}u(t)\in U(t) \mbox{ a.e. }t\in[S,T],
\\
\\
\hspace{0.2 in} d(t) \in D(t) \mbox{ a.e. }t\in[S-h,S]
\\
\\
\hspace{0.2 in}(x(S),x(T),T)\in \tilde C\;.
\end{array}
\right.
$$
Here, $h:= h_{N}$. The data for ($P_{FT}$) comprises a real number  $S$, real numbers $h_{0}, \ldots, h_{N}$ such that  $0=h_{0} <\ldots < h_{N}$, functions
 $\tilde g(.,.,.): \R^{n} \times \R^{n}\x \R \rightarrow \R$, $\Lambda(.,.):[S-h_{k},S] \times\R^{n}\rightarrow \R$, 
 $f(.,.,.):[S,\infty) \times\R^{(1+N)\times n} \times \R^{ m} \rightarrow \R^{n}$ and $L(.,.,.):[S,\infty) \times\R^{(1+N)\times n} \times \R^{m} \rightarrow \R$, multi-functions $U(.):[S,\infty)\leadsto \R^{m}$ and $D(.):[S-h,S]\leadsto \R^{n}$ and a set $\tilde C \subset   \R^{n} \times \R^{n}\x [0,\infty)$. 
\ \\

\noindent
A {\it feasible process}  for (P$_{FT}$) is a $4$-tuple $(x(.),u(.),d(.) ,T)$, where $T$ is a number ($T \geq S$), $x(.):[S,T] \rightarrow \R^{n}$, $u(.):[S,T]\rightarrow \R^{m}$ and $d(.):[S-h,S]\rightarrow \R^{n}$ are functions in the specified spaces, satisfying the constraints in ($P_{FT}$) and such that $t\rightarrow \Lambda(t,d(t))$ is integrable on $[S-h,S]$, and 
$
t \rightarrow L(t, \{x(t-h_{k})\};\{ d(t-h_{k})\})
$
is integrable on $[S,T]$.
\\

\noindent
A feasible process $(\bar x(.),\bar u(.),\bar d(.), \bar T)$ is {\it an {\it $L^{\infty}$} local minimizer} if there exists $\epsilon >0$ such that
$$
J(x(.), u(.),d(.),T)\,\geq\, J(\bar{x}(.), \bar{u}(.),\bar d(.), \bar T)
$$
for any feasible process $(x(.), u(.),d(.),T)$ satisfying
$
\|x(.)-\bar x(.)\|_{L^{\infty} ([S,T\wedge \bar {T}];\R^{n})} + |T-\bar T| \leq\e\,.
$
\ \\

\noindent
We shall invoke
the following hypotheses, in which  $\tilde f(.,.,.):=(f, L)(.,.,.)$ and
$(\bar x(.),\bar u(.),\bar  d(.),\bar T)$ is a given feasible process.
 For some $\e>0$:
\begin{itemize}
\item[(HFT1):] 
$g(.,.,.)$ is Lipschitz continuous on  $(\bar x(S),\bar x(\bar T),\bar T) + \epsilon \B^{2n+1}$.
$\tilde C$ is a closed subset of $\R^{2n+1}$. 
\item[(HFT2):] For every $z \in \R^{(1+N)\times n}$, the function $\tilde f(.,z,.)$ is ${\cal L}([S,\bar T + \epsilon])\times {\cal B}$ measurable, the set Gr$\,U(.)$  is
${\cal L}([S,\infty])\times {\cal B}$ measurable and the set Gr$\,D(.)$ is
${\cal L} ([S-h,S])\times {\cal B}$ measurable. 
\item[(HFT3):]
There exists a function a Borel measurable function $k(.\,.): [S,\bar T] \times \R^{m}\times \R^{n \times (N+1)}$ and numbers $\bar k >0$ and $\bar c > 0$ such that
$t \rightarrow k(t,\bar u(t),\{\bar d(t-h_{k})\})$  is integrable on $[S,\bar T]$ and  
\vspace{0.1 in}

 $
 \hspace{0.3 in} \mbox{$ \tilde f(t,., u;\{d_{k}\})$ is $k(t,u,\{d_{k}\})$-Lipschitz on  $\{ \bar x(t-h_{k})\}+ \epsilon 
 \B^{n \times (N+1)}$ }
 $
\vspace{0.1 in}

 \noindent 
for  all $u \in U(t)$,  $\{d_{k}\} \in D(t-h_{0}) \times\ldots \times D(t-h_{N})$  a.e. $t \in [S,T]$. Moreover
$$
\tilde f(t,., u;\{d_{k}\}) \mbox{ is } \bar k\mbox{-Lipschitz  and }\bar c\mbox{-bounded on } \{\bar x(t-h_{k})\}+ \epsilon 
 \B^{n \times (N+1)} 
 $$
 for all $u \in U(t)$,  $\{d_{k}\} \in D(t-h_{0}) \times\ldots \times D(t-h_{N})$  a.e. $t \in [\bar T -\epsilon,\bar T+ \epsilon]$, 
\end{itemize}
\vspace{0.1 in}

\noindent
There follows a set of necessary conditions for $(\bar x(.),\bar u(.),\bar d(.), \bar T)$ to be an $L^{\infty}$-local minimizer for  the free end-time problem ($P_{FT}$). 
\begin{thm}\label{thm3_1}
Let  $(\bar x(.),\bar u(.),\bar d(.), \bar T) )$ be an $L^{\infty}$ local minimizer for $(P_{FT})$.
Assume hypotheses (HFT1)-(HFT3). Assume also that $\bar T -S >  h$\,.
\ \\

\noindent
Then there exists  $p_{k}(.)\in W^{1,1}([S-h_k,\bar T];\R^{n})$ such that
\begin{equation*}
\left\{
\begin{array}{ll}
\dot p_{k}(t)=0 &\mbox{ for } t \in [S-h_{k},S] 
\\
p_{k}(t)=0 &\mbox{ for } t \in [ \bar T-h_{k},\bar T]  \, \,,
\end{array}
\right.
\end{equation*}
$k=1,\ldots, N$,  $\lambda \geq 0$ and  $\xi \in \R$, with the following properties, in which 
$p(.)\in W^{1,1}([S,\bar T];\R^{n})$ is the function:
\begin{equation}
\label{p equation}
p(t)=\sum_{k=0}^{N}p_{k}(t) \quad \mbox{for } t \in [S,\bar T]\,.
\end{equation}
Conditions (a)-(d), (b$^{*}$) and  (c$^{*}$) of Thm. \ref{main} (in which $(f,L)$ does not depend on the `delayed' control variables $(u(t-h_{1}),\ldots,u(t-h_{N}))$ and $[S,T]$ is replaced by $[S, \bar T]$). Furthermore, in place of  (d), the following `free end-time' transversality condition is satisfied:
\begin{itemize}
  \item[(d$\,'$):] $\qquad \left(p(S),-p(\bar T),\xi \right) \in \lambda \,\partial \tilde g(\bar x(S),\bar x(\bar T),\bar T)+N_{\tilde C}(\bar x(S),\bar x(\bar T), \bar T)$, 
  \end{itemize}
  in which  $\xi$ is some number that satisfies
$$
\xi \in \underset{t\rightarrow \bar T}{\mbox{ess}}\;
\left\{
 \max_{u \in U(t)}\;(p(\bar T)\cdot f-\lambda L)(t,\{\bar x(\bar T-h_{k})\}, u)
 \right\}\,.
$$
%
%
\end{thm}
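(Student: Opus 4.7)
The plan is to reduce the free end-time problem to the fixed end-time problem of Theorem \ref{main} and then extract the additional free-time transversality (d$\,'$) by perturbing the end-time. Since ($P_{FT}$) has no control delays, Theorem \ref{main} specialises directly. First I would observe that $(\bar x(.),\bar u(.),\bar d(.))$ is a fortiori an $L^{\infty}$ local minimizer of the fixed end-time problem obtained by freezing $T=\bar T$ and taking endpoint constraint $C=\{(a,b):(a,b,\bar T)\in\tilde C\}$; hypotheses (HFT1)-(HFT3) restricted to $[S,\bar T]$ imply (H1)-(H3). Applying Theorem \ref{main} yields $\lambda\geq 0$ and $p_k(.)$ satisfying (a)-(d), (b$^{*}$) and (c$^{*}$). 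Because $N_{\tilde C}(\bar x(S),\bar x(\bar T),\bar T)$ projected onto the first $2n$ coordinates contains $N_C(\bar x(S),\bar x(\bar T))$, the $(a,b)$-component of the desired inclusion (d$\,'$) is already in hand; it remains to identify the $\xi$-component.

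To extract $\xi$, for each small $\tau\in\R$ I would build a comparison feasible process with end-time $\bar T+\tau$. For $\tau>0$, choose any measurable selection $v(.)$ of $U(.)$ on $[\bar T,\bar T+\tau]$, set $u_\tau=\bar u\,\chi_{[S,\bar T]}+v\,\chi_{(\bar T,\bar T+\tau]}$, and integrate the delay dynamics forward; hypothesis (HFT3) keeps $x_\tau$ in the $\epsilon$-tube about $\bar x$ for $|\tau|$ small. For $\tau<0$, restrict $(\bar x,\bar u)$ to $[S,\bar T+\tau]$. The constraint $(x_\tau(S),x_\tau(\bar T+\tau),\bar T+\tau)\in\tilde C$ is generally violated, so I would replace the cost by $J+M\,d_{\tilde C}(x(S),x(T),T)$ for $M$ exceeding the Lipschitz constant of $\tilde g$, which is an exact penalisation near the nominal process by a standard Clarke-type argument. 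Ekeland's variational principle then supplies an exact $L^\infty$ local minimizer of the penalised free-time problem at end-time $\bar T+\tau_n$, for a sequence $\tau_n\to 0^{\pm}$, within an $O(\sqrt{|\tau_n|})$ $L^{\infty}$-neighbourhood of $(\bar x,\bar u,\bar d)$.

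Apply Theorem \ref{main} to each Ekeland minimizer (fixed end-time $\bar T+\tau_n$); the Ekeland error injects an $o(1)$ term into the adjoint inclusion and Weierstrass condition that vanishes in the limit. Uniform $W^{1,1}$ bounds on $p_k^{(n)}$ from (HFT3), combined with nontriviality and standard compactness, yield a subsequential limit that recovers (a)-(d), (b$^{*}$), (c$^{*}$) for the nominal problem. Comparing the cost at $\bar T+\tau$ to that at $\bar T$ gives, at leading order in $\tau$,
\begin{equation*}
\tau\,\partial_T\tilde g(\bar x(S),\bar x(\bar T),\bar T)+\tau\,\bigl(\lambda L-p(\bar T)\cdot f\bigr)\bigl(t,\{\bar x(\bar T-h_k)\},v(t);\{\bar d(\bar T-h_k)\}\bigr)\geq o(\tau),
\end{equation*}
after substituting the forward dynamics to express $x_\tau(\bar T+\tau)-\bar x(\bar T)$ and using the Taylor expansion of $\tilde g$. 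The penalty contributes a normal cone element of $\tilde C$ and a $\lambda\,\partial\tilde g$ term. Maximising over the selection $v(.)$ converts $(p(\bar T)\cdot f-\lambda L)$ into $\max_{u\in U(t)}(p(\bar T)\cdot f-\lambda L)$, and because this maximised function is only essentially bounded in $t$, the resulting one-sided limits as $\tau\to 0^{\pm}$ of the averaged expression are constrained merely to lie within the essential-value interval of \cite{CV1}, producing precisely (d$\,'$).

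The main obstacle is the interplay between end-time perturbation and the delay structure: the adjoint arcs $p_k(.)$ live on intervals $[S-h_k,T]$ whose upper endpoint varies with $T$, so convergence $p_k^{(n)}\to p_k$ must be established on shifting intervals, and the ``collapsing'' contribution near $t=\bar T$ must be matched precisely to the essential-value expression rather than to a pointwise maximum. The uniform Lipschitz and boundedness control from (HFT3) on the interval $[\bar T-\epsilon,\bar T+\epsilon]$ is exactly what is needed to make the limit passage rigorous, and invoking the essential-value machinery of \cite{CV1} converts the two one-sided inequalities for $\tau\to 0^{\pm}$ into the two-sided inclusion (d$\,'$).
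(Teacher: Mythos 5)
Your overall strategy for the $\xi$-component---extend $\bar u(.)$ past $\bar T$ by a near-maximizing selector of $U(.)$, compare costs at $\bar T\pm\tau$, and read off the two one-sided inequalities as membership of $\nabla_T\tilde g$ in the essential-value interval---is exactly the mechanism the paper uses (its Step 1, inequalities (\ref{inequalities}) onward). But there are two genuine gaps in how you attach this to the constrained problem.

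First, the claim that the $(a,b)$-component of (d$\,'$) is ``already in hand'' because $N_{\tilde C}(\bar x(S),\bar x(\bar T),\bar T)$ projects onto something containing $N_C(\bar x(S),\bar x(\bar T))$ for the slice $C=\{(a,b):(a,b,\bar T)\in\tilde C\}$ is false in general. Limiting normals to a time-slice of $\tilde C$ need not be projections of limiting normals to $\tilde C$ itself: for $\tilde C=\{(a,T):a\le\sqrt{|T-\bar T|}\}$ the slice at $\bar T$ is $(-\infty,0]$ with normal cone $[0,\infty)$ at $0$, while $N_{\tilde C}(0,\bar T)$ is the $T$-axis, whose projection onto the $a$-coordinate is $\{0\}$. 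So applying Theorem \ref{main} to the frozen-time problem produces a transversality inclusion that cannot, in general, be upgraded to the three-variable inclusion (d$\,'$). The paper avoids this by never slicing: the end-time is kept as a genuine variable, the constraint is penalized jointly through $d_{\tilde C}(x(S),x(T),T)$, and the triple $(p(S),-p(\bar T),\xi)$ is extracted from a single limiting argument.

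Second, the exact penalization $J+M\,d_{\tilde C}(x(S),x(T),T)$ with $M$ exceeding the Lipschitz constant of $\tilde g$ is not available for a general closed terminal constraint set: exact penalization of \emph{right} endpoint (and end-time) constraints requires a controllability/constraint-qualification hypothesis that is not assumed here, and indeed the theorem permits the abnormal case $\lambda=0$, which your penalized problem (normal by construction) could never produce. This is precisely why the paper, both in Step 4 of the proof of Theorem \ref{main} and in Step 2 of the proof of Theorem \ref{thm3_1}, replaces the cost by $\max\{g(\cdot)-g(\bar z(\bar T),\bar x(\bar T),\bar T)+\gamma_i,\,d_{\tilde C}(\cdot)\}$, applies Ekeland with the free-time metric (\ref{eke_ft}), uses the max-rule for subdifferentials to split off $\lambda\partial g$ and $(1-\lambda)\partial d_{\tilde C}$, and then invokes the fact that elements of $\partial d_{\tilde C}$ off the set have unit length to establish nontriviality. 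You would also need, as the paper does, to first prove the free-end-time conditions for the \emph{free right endpoint} problem $(Q_{FT})$ (where the end-time perturbation creates no feasibility issue) before the Ekeland reduction, so that a single consistent multiplier set $(\lambda,\{p_k\},\xi)$ satisfies all of (a)--(c) and (d$\,'$) simultaneously, rather than gluing multipliers obtained from separate fixed-time problems at the end-times $\bar T+\tau_n$.
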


\noindent
{\bf Comments}
\begin{itemize}
\item[(a):] Condition (d$'$) is  apparently the first generalized tranversality condition for free end-time optimal control problems with delays, when the data is assumed to be merely measurable w.r.t. to the time variable. But it provides new information even in the continuous case. Indeed, suppose that $\tilde C=C \times \R$, i.e. the free end-time $T$ is unconstrained and  $g$ is a $C^{1}$ function, $(f,L)$ are continous w.r.t. the time variable and $U(t)$ is a constant compact set U. Then the `essential value' is single valued and the transversality condition provides the following information about the optimal end-time $\bar T$:
\begin{equation}
\label{2_sided}
 \max_{u \in U}\;\big( p(\bar T)\cdot f-\lambda L\big)(\bar T,\{ \bar x(\bar T-h_{k})\}, u) = \nabla_{T}g(\bar x(S), \{\bar x(\bar T),\bar T)\,.
\end{equation}
Note that this is an equality (or `two sided') relation. The necessary conditions in \cite{KT} include an inequality (or `one sided') version of this relation which conveys less information. 
\item[(b):]
The transversality condition (\ref{2_sided}) is used in \cite{Boccia}, to derive sensitivity relations and construct algorithms for the computation of solutions to free end-time optimal control problems with delays.
\end{itemize}

\section{Proof of Theorem \ref{main}}
\label{Thm_2.1}
%

\noindent
We shall make extensive use of the following existence theorem for delay differential equations with accompanying estimates, which is can be regarded as a generalization of Filippov's Existence Theorem \cite[Thm. 2.4.3.]{Vinter} for (delay free) differential inclusions. 

\begin{thm}
\label{Filippov}
Take $0=:h_{0} < \ldots < h_{N-1} < h_{N}$, $\bar \epsilon  \in (0, \infty) \cup \{+ \infty\}$, a function $f(.,.):[S,T]  \times \R^{(N+1)\times n} \rightarrow \R^{n}$ and a measurable function $d(.):[S-h,S] \rightarrow \R^{n}$. 
Take also $y(.) \in W^{1,1}([S,T];\R^n)$ (the `reference trajectory') and $\xi \in \R^{n}$ (the `initial state').
Assume
\begin{itemize}
\item[(a):] $f(.,\{x_{k}\})$ is measurable for each $\{x_{k}\} \in \R^{(N+1)\times n}$.
\item[(b):] 
there exists $k(.) \in L^{1}$ such that  
\begin{equation}
\label{Filippov}
\{x_{k}\} \rightarrow f(t,\{x_{k}\};\{d(t-h_{k})\})
\end{equation}
is $k(t)$ Lipschitz continuous on 
$(y((t-h_{0}) \vee S), \ldots, y((t-h_{N})\vee S))+\bar \epsilon \B$, a.e.. 
\end{itemize} 
Assume also that 
\begin{multline*}
|y(S)-\xi| +
\int_{S}^{T} | \dot y(t) - f(t,\{y(t-h_{k})\});\{ d(t-h_{k})\})| dt
\\
\hspace{-0.5 in} \leq \;\bar\epsilon \times \left(\exp\{(N+1)\,\int_{S}^{T}k(t)\,dt\}\right)^{-1}\,. 
\end{multline*} 
(In the case $\bar \epsilon = + \infty$,  this last condition is automatically satisfied and the function \eqref{Filippov} is required to be $k(t)$ Lipschitz on $\R^{n}$.)
Then:
\ \\ 

\noindent
{\bf (A):} there exists a solution $x(.)\in W^{1,1}([S,T];\R^{n})$ to the equation
\begin{equation}
\label{dde}
\dot x(t) = f(t,\{x(t-h_{k})\});\{ d(t-h_{k})\}) \mbox{ a.e. }t \in [S,T]
\end{equation}
with initial state $x(S)=\xi$ and satisfying
\begin{eqnarray*}
&&||y(.)- x(.)||_{L^{\infty}(S,t)}\;  \leq \;  |y(S)-x(S)| + \int_S^t |\dot y(s) - \dot x(s)| \ ds 
\; \leq \;
\\
&&   \hspace{0.5 in} e^{(N+1)\int_S^t k(s) \ ds }\bigg(   |\xi - y(S)| \;+ \;\int_{S}^{t} | \dot y(s) - f(s,\{y(s-h_{k})\});\{ d(s-h_{k})\})| ds
 \bigg) \,,
\end{eqnarray*}
for all $t \in [S,T]$.
\ \\

\noindent
{\bf  (B):}  if $\bar \epsilon = +\infty$, \eqref{dde} has a unique  solution $x(.)$ in $W^{1,1}$ (for given $x(S)$, $u(.)$ and $d(.)$).

\end{thm}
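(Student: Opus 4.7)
My plan is to reduce the delay differential equation \eqref{dde} to a finite sequence of delay-free ordinary differential equations on sub-intervals of length at most $h_{1}$ (the smallest strictly positive delay), apply the standard (delay-free) Filippov existence theorem on each sub-interval, and then paste the resulting pieces together while tracking Gronwall-type estimates carefully to obtain the exponent $(N+1)\int_{S}^{t}k(s)\,ds$.

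First I partition $[S,T]$ into consecutive closed sub-intervals $[t_{i},t_{i+1}]$ with $t_{0}=S$ and $t_{i+1}-t_{i}\leq h_{1}$. On the initial sub-interval $[S,(S+h_{1})\wedge T]$, for every $k\geq 1$ and every $t$ in the sub-interval, $t-h_{k}\leq S$, so $x(t-h_{k})$ is replaced by the known initial datum $d(t-h_{k})$ under the convention defining $f$. Thus \eqref{dde} reduces to a standard ODE $\dot{x}(t)=F_{0}(t,x(t))$ in which $F_{0}(t,\cdot)$ is $k(t)$-Lipschitz on the slice $y(t)+\bar\epsilon\B$, and is measurable in $t$ because of hypothesis (a) and measurability of $d(.)$. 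The ordinary Filippov theorem (\cite[Thm.~2.4.3.]{Vinter}) then produces a solution on $[S,(S+h_{1})\wedge T]$ with $x(S)=\xi$, together with the standard one-step estimate. I then iterate: on $[t_{i},t_{i+1}]$, the values $x(t-h_{k})$ for $k\geq 1$ are already known (either as initial data, if $t-h_{k}<S$, or from the previously constructed pieces of $x$), so once again the system is a genuine ODE in $x(t)$ to which Filippov applies.

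To obtain the global estimate I set $\alpha(t):=\sup_{s\in[S,t]}|y(s)-x(s)|$ and $R(t):=|y(S)-\xi|+\int_{S}^{t}|\dot{y}(s)-f(s,\{y(s-h_{k})\};\{d(s-h_{k})\})|\,ds$. On each sub-interval, using the $k(t)$-Lipschitz continuity of $f(t,\cdot;\{d(t-h_{k})\})$ and summing the contributions from all $N+1$ delayed state slots $x(t-h_{0}),\ldots,x(t-h_{N})$ (each of which satisfies $|x(s-h_{k})-y(s-h_{k})|\leq\alpha(s)$ when $s-h_{k}\geq S$, and equals zero when $s-h_{k}<S$), I arrive at the integral inequality
\begin{equation*}
\alpha(t)\;\leq\;R(t)+(N+1)\int_{S}^{t}k(s)\,\alpha(s)\,ds.
\end{equation*}
Gronwall's lemma then yields $\alpha(t)\leq e^{(N+1)\int_{S}^{t}k(s)\,ds}\,R(t)$, which is precisely the asserted estimate. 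The smallness hypothesis on $R(T)$ is exactly what guarantees $\alpha(t)\leq\bar\epsilon$ throughout, so that the Lipschitz hypothesis remains applicable at each inductive step and the construction does not exit the tube $y(t)+\bar\epsilon\B$. For the uniqueness assertion (B) under $\bar\epsilon=+\infty$, given two $W^{1,1}$ solutions $x_{1},x_{2}$ with the same initial state, I apply the same estimate with $x_{1}$ in the role of reference trajectory $y$ and $x_{2}$ in the role of $x$: the residual $R$ vanishes and so does the initial error, giving $x_{1}\equiv x_{2}$.

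The main technical obstacle will be the bookkeeping that converts per-step ODE Filippov estimates into the single clean global bound with exponent $(N+1)\int k$: one has to verify that the induction hypothesis on $\alpha(t_{i})$ remains valid to re-apply Filippov on $[t_{i},t_{i+1}]$ while staying inside the tube, and that the residual integrals from all sub-intervals add up to the global $R(T)$ without picking up extra multiplicative constants. The calibration of the smallness condition $|y(S)-\xi|+\int_{S}^{T}|\dot y-f|\,dt\leq\bar\epsilon\,e^{-(N+1)\int k}$ is designed precisely to make this bootstrap self-sustaining.
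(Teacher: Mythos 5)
Your proposal is correct, but it follows a genuinely different route from the paper's. The paper proves part (A) by adapting the Picard-iteration proof of the delay-free Filippov theorem for differential inclusions directly to the whole interval $[S,T]$: one defines successive approximations $x_{j+1}(t)=\xi+\int_S^t f(s,\{x_j(s-h_k)\};\{d(s-h_k)\})\,ds$, all delay slots being handled simultaneously, and the factor $(N+1)\int_S^t k$ emerges in a single summation over the slots inside the iteration estimates; part (B) is then Gronwall, exactly as you do it. You instead use the method of steps: partition $[S,T]$ into sub-intervals of length at most $h_1$, freeze the delayed slots at already-constructed values so that each step is a genuine delay-free ODE, invoke the standard Filippov theorem as a black box on each step, and recover the global estimate by a Gronwall argument. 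Your route has the appeal of reusing the cited theorem verbatim rather than re-running its proof, but it is only available because the delays are discrete with a positive minimal gap $h_1>0$ (it would not adapt to distributed delays), and the bookkeeping you flag is real: the per-step Filippov bounds compound multiplicatively, so to land on the clean exponent $(N+1)\int_S^t k$ one must check an inequality of the form $\prod_i e^{\int_{t_i}^{t_{i+1}}k}\bigl(1+N\int_{t_i}^{t_{i+1}}k\bigr)\le e^{(N+1)\int_S^T k}$, and simultaneously verify at each step that the inductive bound on $\alpha(t_i)$ keeps the frozen slots inside the product tube so that the per-step Lipschitz hypothesis and the per-step smallness condition both remain valid. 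These verifications do go through under the stated global smallness hypothesis (the calibration is, as you say, exactly what makes the bootstrap close), so the argument is sound; the paper's global iteration simply avoids this compounding by treating all slots at once. Your uniqueness argument for (B) coincides with the paper's.
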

\begin{proof}
(A) is proved by means of a straightforward adaptation of the proof  of the well-known related existence theorem for differential inclusions, based on `Picard iteration' (see, e.g., \cite[Proof of Thm. 2.4.3.]{Vinter}). To prove (B) note that, if the hypotheses are satisfied with $\bar \epsilon = +\infty$, then (\ref{dde}) has a solution, by part (A). If there are two solutions $x(.)$ and $y(.)$, we deduce from hypothesis (b) that the continuous function $\xi(t):= ||x(.)-y(.)||_{L^{\infty}([0,t])}$ satisfies
$$
|\xi(t)| \,\leq \, \int_{[S,t]}(N+1)\x k(s)|\xi(s)|ds \quad \mbox{ for } t \in [S,T]\,.
$$
We conclude from Gronwall's inequality that $\xi(.)\equiv 0$, whence $x(.)=y(.)$\,.
\end{proof}

\noindent 
We first validate the assertions  of Thm. \ref{main} when hypotheses (H1)-(H3) are supplemented by several additional hypotheses. We then show that the assertions remain true when the additional hypotheses are removed. The additional hypotheses 
(which make reference to the initial state $\bar x(S)$ of the process $(\bar x(.), \bar u(.))$ under consideration) 
are as follows: 
\begin{itemize}
%
\item[(A0):]
$(f,L)(t,\{x_{k} \}, \{u_{k} \};\{d_{k}\})$ does not depend on the initial data $\{d_{k}\}$ and $\Lambda(.,.)\equiv 0$.
\item[]
(When (A0) is satisfied, we write $f(t,\{x_{k} \}, \{u_{k} \})$ in place of $f(t,\{x_{k} \}, \{u_{k} \};\{d_{k}\})$.)
\item[(A1):] 
There exists a bounded, ${\cal L}\times {\cal B}$ measurable function $\tilde L(.,.):[S,T]\times \R^{m}\to \R$ such that
$ 
L(t,\{x_{k}\},\{ u_k \})= \tilde L(t,u_{0})
$.

\item[(A2):] There exist integrable functions $c_{0}(.):[S,T]\rightarrow \R$ and  $k_{0}(.):[S,T]\rightarrow \R$ such that, for all selectors  $u(.)\in U(.)$ and a.e. $t \in [S,T]$, the mapping
\vspace{0.05 in}

\noindent
(i): $\{x_{k}\} \rightarrow f(t,\{x_{k}\},\{u(t-h_{k}) \})$
is $c_{0}(t)$ bounded  on  $\R^{n \times (N+1)}$,
\vspace{0.05 in}

\noindent
(ii): $\{x_{k}\} \rightarrow f(t,\{x_{k}\},\{u(t-h_{k}))$ is $k_{0}(t)$-Lipschitz continuous on $\R^{n \times (N+1)}$.
 \item[(A3):]  $C= \R^{n} \times \R^{n}$\,.
 \item[(A4):] There exist continuously differentiable functions  $l_{0}(.): \R^{n} \rightarrow \R$ and $l_{1}(.): \R^{n} \rightarrow \R$ and $\alpha \geq 0$, such that
\begin{equation}
\label{sep}
g(x_{0},x_{1})= l_{0}(x_{0}) 
+ \alpha |x_{0}- \bar x(S) |
+\, l_{1}(x_{1})\quad \mbox{for all } (x_{0},x_{1}) \in \R^{n}\times \R^{n}\,.
\end{equation}
\end{itemize}
%
%
%
%
%
%
\vspace{0.1 in}

\noindent
{\bf Step 1:} {\it We confirm the  assertions of Thm. \ref{main}   (with $\lambda=1$) under (H1)-(H3), (A0)-(A4).}
\ \\

\noindent
Assume that $(\bar x(.), \bar u(.))$ is an $L^\infty$ local minimizer  for problem ($P$), under hypotheses (H0)-(H3) and (A1)-(A4). Then, for some $\epsilon>0$, $(\bar x(.), \bar u(.), \bar d(.))$ is a minimizer for
$$
(P^{\epsilon})
\left\{
\begin{array}{l}
\hspace{0.2 in}\mbox{Minimize } \; 
J(x(.),u(.)):= l_{0}(x(S))+\alpha|x(S)-\bar x(S)|+ l_{1}(x(T))+ +\int_{[S,T]}\tilde L(t,u(t))\,dt ,
\\
\hspace{0.2 in} \mbox{subject to}
\\
\hspace{0.2 in} \dot x(t)= f(t,\{ x(t-h_{k})\}, \{ u(t-h_{k}\}),
\\
\hspace{0.2 in}u(t)\in U(t)\mbox{ a.e. }t\in[S,T],
\\
\hspace{0.2 in}||x(.)-\bar x(.) ||_{L^{\infty}} \leq \epsilon\,.
\end{array}
\right.
$$
For each positive integer $i$, consider the related problem:

$$
(P_{i})
\left\{
\begin{array}{l}
\hspace{0.2 in}\mbox{Minimize } \; 
J_{i}(x(.),\{y_{k}(.)\},u(.)):= \, l_{0}(x(S))+ \alpha|x(S)-\bar x(S)|+l_{1}(x(T))
\\
\hspace{0.7 in}+\int_{[S,T]}\tilde L(t,u(t))\,dt \,+
i \times \left(\sum_{k=0}^{N}\int_{[(S+h_{k})\wedge T,T]} k_{0}(t)|y_{k}(t)-x(t-h_{k})|^{2} dt  \right)
\\
\hspace{0.2 in} \mbox{over } \{y_{k}(.)\in L^{1}_{k_0(.)}([(S+h_{k}) \wedge T, T];\R^{n})\}
\mbox{ and selectors } u(.) \mbox{ of } U(.)\, \mbox{ s.t. } 
\\
\hspace{0.2 in} \dot x(t)= f(t, \{y_{k}(t)\},\{u(t-h_{k}\}),\mbox{ a.e. }
\\
\hspace{0.2 in}u(t)\in U(t)\mbox{ a.e. }t\in[S,T],
\\
\hspace{0.2 in}||x(.)-\bar x(.) ||_{L^{\infty}} \leq \epsilon.
\end{array}
\right.
$$
Here, $k_{0}(.)$ is the integrable bound of hypothesis (A2) and $L^1_{k_0(.)}([a,b])$ denotes measurable functions $\phi:[a,b]\to\R^n$ such that $k_0(t)\phi(t)\in L^1$.
Observe that the cost can be infinite because the $y_{k}(.)$'s are allowed to be $L^{1}$ functions and the cost involves $L^{2}$ norms. Write the infimum costs  of  $(P_{i})$ and $(P^{\epsilon})$ as $\inf(P_{i})$ and  $\inf(P^{\epsilon})$, respectively.
\begin{lem}
\label{lemma_4_2}
$$
\underset{i \rightarrow \infty}{\lim} \; \inf(P_{i})\;=\;  \inf(P^{\epsilon})\,.
$$
\end{lem}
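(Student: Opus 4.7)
\textbf{Plan for Lemma \ref{lemma_4_2}.} The easy direction is $\inf(P_i)\le \inf(P^{\epsilon})$ for every $i$: given any feasible $(x(.),u(.))$ for $(P^{\epsilon})$, set $y_k(t):=x(t-h_k)$ on $[(S+h_k)\wedge T,T]$. This choice annihilates the quadratic penalty and yields a feasible triple for $(P_i)$ whose cost equals $J(x(.),u(.))$. Taking infima gives $\limsup_i \inf(P_i)\le \inf(P^{\epsilon})$.

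The substantive direction is $\liminf_i \inf(P_i)\ge \inf(P^{\epsilon})$. Pick a near-optimal sequence $(x_i(.),\{y_k^i(.)\},u_i(.))$ for $(P_i)$ with $J_i(x_i,\{y_k^i\},u_i)\le \inf(P_i)+1/i$. Since $l_0,l_1$ are continuous and $\tilde L$ is bounded (by (A1)), and $x_i$ satisfies $\|x_i-\bar x\|_{L^\infty}\le\epsilon$, the non-penalty part of $J_i$ is uniformly bounded; combined with the bound $\inf(P_i)\le \inf(P^{\epsilon})$ established above, this forces
\[
\sum_{k=0}^{N} i\int_{[(S+h_k)\wedge T,T]} k_0(t)\,|y_k^i(t)-x_i(t-h_k)|^2\,dt \le M
\]
for some $M$ independent of $i$. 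Hence the $L^2$-type quantity vanishes at rate $O(1/i)$, and Cauchy--Schwarz (using integrability of $k_0$) gives the $L^1$ convergence
\[
\sum_{k}\int k_0(t)\,|y_k^i(t)-x_i(t-h_k)|\,dt \;=\; O(i^{-1/2}) \;\to\; 0.
\]

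Next I apply Theorem \ref{Filippov} in its global form ($\bar\epsilon=+\infty$), using the control $u_i$, reference trajectory $y(.)=x_i(.)$, and initial state $\xi=x_i(S)$. Hypothesis (A2)(ii) supplies the global Lipschitz constant $k_0(t)$, and (A2)(i) the $L^1$ boundedness needed for existence. The bound on the defect of $x_i$ as a solution is
\[
|\dot x_i(t)-f(t,\{x_i(t-h_k)\},\{u_i(t-h_k)\})|\le k_0(t)\sum_k |y_k^i(t)-x_i(t-h_k)|,
\]
again by (A2)(ii). Theorem \ref{Filippov} then produces $\tilde x_i\in W^{1,1}$ satisfying the genuine delay differential equation $\dot{\tilde x}_i(t)=f(t,\{\tilde x_i(t-h_k)\},\{u_i(t-h_k)\})$, $\tilde x_i(S)=x_i(S)$, with
\[
\|\tilde x_i-x_i\|_{L^\infty}\le e^{(N+1)\int_S^T k_0(s)\,ds}\sum_k \int k_0(t)|y_k^i(t)-x_i(t-h_k)|\,dt \;\to\; 0.
\]

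Finally, continuity of $l_1$ on a neighbourhood of $\bar x(T)$ yields $|l_1(\tilde x_i(T))-l_1(x_i(T))|\to 0$, while $\tilde x_i(S)=x_i(S)$ and $\int \tilde L(t,u_i(t))\,dt$ is unchanged, so
\[
J(\tilde x_i,u_i) \;\le\; J_i(x_i,\{y_k^i\},u_i) \;-\; (\text{penalty})\;+\;o(1)\;\le\; \inf(P_i)+1/i+o(1).
\]
The remaining obstacle (and the only delicate point) is feasibility of $(\tilde x_i,u_i)$ for $(P^{\epsilon})$: one only has $\|\tilde x_i-\bar x\|_{L^\infty}\le \epsilon+o(1)$, not strictly $\le\epsilon$. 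This is resolved by invoking that $(\bar x(.),\bar u(.),\bar d(.))$ is an $L^\infty$ local minimizer for $(P)$ on some $\epsilon_0$-ball, choosing the auxiliary parameter $\epsilon<\epsilon_0$ at the outset; then $\tilde x_i$ lies in the $\epsilon_0$-ball for large $i$, so $J(\bar x,\bar u)\le J(\tilde x_i,u_i)$. Since $(\bar x,\bar u)$ is also feasible for $(P^{\epsilon})$, $\inf(P^{\epsilon})=J(\bar x,\bar u)$, and we conclude $\inf(P^{\epsilon})\le \inf(P_i)+1/i+o(1)$, which completes the proof upon passing to the limit.
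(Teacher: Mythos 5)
Your proof is correct, and it rests on the same two pillars as the paper's: the monotone ``easy'' inequality $\inf(P_i)\le\inf(P^{\epsilon})$ obtained by setting $y_k(t)=x(t-h_k)$, and Theorem \ref{Filippov} (with $\bar\epsilon=+\infty$, justified by (A2)) to convert a triple $(x(.),\{y_k(.)\},u(.))$ into a genuine trajectory whose $L^{\infty}$ distance from $x(.)$ is controlled by $\sum_k\int k_0|y_k-x(\cdot-h_k)|\,dt$. Where you diverge is in how the quadratic penalty is played off against the resulting cost perturbation. The paper fixes $i$, takes an \emph{arbitrary} finite-cost feasible process for $(P_i)$, and shows directly that $J_i$ exceeds the cost of the corrected process minus $\gamma/i$, via H\"older plus the elementary bound $\min_z(az^2-bz)=-b^2/(4a)$; this gives the uniform rate $\inf(P_i)\ge\inf(P^{\epsilon})-\gamma i^{-1}$ without ever needing to know that the penalty is small. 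You instead pass to a near-minimizing sequence, use the a priori upper bound $\inf(P_i)\le\inf(P^{\epsilon})$ together with boundedness of the non-penalty terms to force the penalty to be $O(1)$, and then extract the $O(i^{-1/2})$ decay of the $L^1$ discrepancy by Cauchy--Schwarz. Your route is slightly less quantitative but entirely sufficient, since the lemma (and its later use, which only requires some $\gamma_i\downarrow 0$) needs no rate. One point genuinely in your favour: you confront the feasibility issue that the corrected trajectory may only satisfy $\|\tilde x_i-\bar x\|_{L^{\infty}}\le\epsilon+o(1)$, and resolve it by taking $\epsilon$ strictly smaller than the radius of local optimality and comparing directly with $J(\bar x,\bar u)=\inf(P^{\epsilon})$; the paper's proof silently asserts that the Filippov-corrected process is feasible for $(P^{\epsilon})$, which requires exactly this kind of adjustment.
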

\begin{proof} We deduce from the special structure of the $(P_{i})$'s and $(P^{\epsilon})$  that 
\begin{equation}
\label{weak}
- \infty \;< \; \inf(P_{i})\; \leq \; \inf(P_{j})\; \leq \; \inf(P^{\epsilon}) \quad \mbox{ for any index values $i < j$}\,.
\end{equation}
Fix $i>0$ and take any feasible process $(x(.), \{y_{k}(.)\},u(.))$ for $(P_{i})$ such that  
$J_{i}(x(.), \{y_{k}(.)\},u(.))$ $< \infty$. (Such a feasible process, namely $(\bar x(.), \{\bar x(. - h_{k})\},\bar u(.), \bar d(. ))$, exists.)
\ \\

\noindent
By Filippov's Thm., applied with  $y(.)=x(.)$ and initial state $\xi = x(S)$, there exists a feasible process $(x_{i}(.), u(.))$ for ($P^\epsilon$) (with the same $u(.)$ and $d(.)$) such that $x_{i}(S)=x(S)$ and
$$
||x_{i}(.)- x(.)||_{L^{\infty}}\,\leq \, K \times \sum_{k=0}^{N}
(  \int_{[(S+h_{k})\wedge T, T]}k_{0}(t) |y_{k}(t)-x(t-h_{k})|  dt )\,.
$$
 ($K$ is a number that does not depend on our choice of $(x(.),\{y_k(.)\}, u(.))$.)
 With the help of  H\"older's inequality, we can show that
 \begin{eqnarray*}
 &&
J_{i}(x(.),\{y_{k}(.)\},u(.)) \,-\, J_{i}(\{x_i(.),x_i(.-h_k)\} ,u(.))
\\
&&
\hspace{0.1 in} \geq \quad\sum_{k=0}^{N}\;\int_{[(S+h_{k})\wedge T,T]}  k_{0}(t)\left(i \times |y_k(t)-x(t-h_{k})|^{2}  - K k_{l_{1}} |y_{k}(t)-x(t-h_{k})| \right)dt \,,
\\
&&
\hspace{0.1 in} \geq
\quad
\sum_{k=0}^{ N}\; \left(( \int_{[S,T]} k_{0}(t)dt)^{-1} \times i 
\times z_{k}^{2} -Kk_{l_{1}} z_{k}
\right),\,
(z_{k}= \int_{[(S+h_{k})\wedge T,T]} k_{0}(t)|y_k(t)-x(t-h_{k})|dt )\,
\\
&&
\hspace{0.1 in}
\hspace{0.1 in} \geq
\quad
\sum_{k=0}^{ N}\;   \underset{z\in \R}{\min}  \left(( \int_{[S,T]} k_{0}(t)dt)^{-1} \times i 
\times z^{2} -Kk_{l_{1}} z
\right)
\; \geq\; - \gamma \, \times i^{-1}\,,
\end{eqnarray*} 
in which $\gamma := \frac{1}{4} \; 
(N+1) \times (Kk_{l_{1}})^{2} \times ||k_{0}(.)||_{L^{1}} $ and $k_{l_{1}}$ is a Lipschitz constant for  $l_{1}(.)$. Since $(x(.), \{y_{k}(.)\}, u(.))$ was chosen arbitrarity, we have shown that
$$
\inf\, (P_{i}) \, \geq\, \inf (P^{\epsilon}) - \gamma\times i^{-1}\,.
$$
Combining this inequality with (\ref{weak}) give the desired relation.

\end{proof}

\noindent
Now write problem $(P_{i})$ as
\ \\

$
\mbox{Minimize } \; \{
J_{i}(x(.),\{y_{k}(.)\},u(.))\,|\,  (x(.),\{y_{k}(.)\},u(.)) \in {\cal A}_{\epsilon} 
\} \,,
$
\ \\

\noindent
in which
\ \\

$ {\cal A}_{\epsilon} := \{(x(.),\{y_{k}\},u(.))\,|\, x(.) \in W^{1,1}, \hspace{0.1 in}y_{k}(.) \in L^{1}_{k_0(.)}([(S+ h_{k})\wedge T, T])
\mbox{ for } k=0,\ldots,N, $

\hspace{1.5 in}$u(.) \mbox{  is a selector of } U(.),\,\dot x= f \mbox{ a.e., } \, ||x(.)- \bar x(.)||_{L^{\infty}} \leq \epsilon  \}$. 
\vspace{0.1 in}

\noindent
Equip ${\cal A}_{\epsilon}$ with the metric
\begin{multline*}
d_{{\cal E}}((x'(.),\{y_k'(.)\},  u'(.)) ,(x(.),\{y_k(.)\}, u(.)) ) \;=\; 
\\
|x'(S)- x(S)|+\sum_{k=0}^{N} \int_{[(S+h_{k})\wedge T,T]}k_0(t)|y_{k}'(t)-y_{k}(t)|\,dt \;+\;
\mbox{meas } \{t \in [S,T]\,|\, u'(t) \not= u(t) \}
\end{multline*}
It can be shown that, w.r.t. this metric, $ {\cal A}_{\epsilon}$ is  complete and $ J_{i}(.,.,.)$
is lower semicontinuous on $ {\cal A}_{\epsilon}$. (The lower semicontinuity can be deduced 
from Fatou's Lemma, as demonstrated in \cite[page 245]{Vinter}.) By Lemma  \ref{lemma_4_2}, there exists $\gamma_{i} \downarrow 0$ such that, for each $i$, $(\bar x(.), y_{0}(t)\equiv \bar x(t-h_0),\ldots, y_{N}(t)\equiv \bar x(t-h_N),\bar u(.),\bar d(.))$ is a $\gamma_{i}$ minimizer for $(P_{i})$\,. According to Ekeland's Theorem, there exists, for each $i$, 
$( x_{i}(.), y_{0}^{i}(.), \ldots, y_{N}^{i}(.),u_{i}(.)) \in {\cal A}_{\epsilon}$ which is a minimizer for the optimization problem:
\begin{multline*}
(\tilde P_{i})\quad \mbox{Minimize } \{J_{i}((x(.),\{y_k(.)\},  u(.))
\\
 + \gamma_{i}^{\frac{1}{2}}
d_{{\cal E}}((x(.),\{y_k(.)\},u(.)),( x_{i}(.), \{y_{k}^{i}(.)\},u_{i}(.)) ) \,|\,\\
 (x(.),\{y_k(.)\},  u(.)) \in {\cal A}_{\epsilon} \}\,,
\end{multline*}
\vspace{-0.4 in}

and
\begin{equation}
\label{eq:ekeland}
d_{{\cal E}}((x_{i}(.),\{y_{k}^i(.)\},  u_{i}(.)), \,(\bar x(.),\{\bar x(.-h_k)\}, \bar u(.)) ) \,\leq \, \gamma_{i}^{\frac{1}{2}}\,.
\end{equation}
It can be deduced from (\ref{eq:ekeland}), with the help of Thm. \ref{Filippov},  that $||x_{i}(.)- \bar x(t)||_{L^{\infty}}\rightarrow {0}$, as $i\rightarrow \infty$. We have then, for $i$ sufficiently large,
$$
||x_{i}(.)-\bar x(.)||_{L^{\infty}} \,\leq \epsilon /2\,.
$$
Define $m_{i}(.,.):[S-h,S]\times \R^{n+m} \rightarrow \R$ 
as
\begin{equation}
\label{errorx}
m_{i}(t,u):= 
\left\{
\begin{array}{ll}
0 & \mbox{if } u=u_{i}(t)\; .
\\
1  & \mbox{if } u \not= u_{i}(t)
\end{array}
\right.\;.
\end{equation}
The cost function for $(\tilde P_{i})$ can be written
\begin{eqnarray*}
&&
\tilde J_{i} ((x(.),\{y_k(.)\},  u(.)):=
i \times \left(\sum_{k=0}^{N}\int_{[(S+h_{k})\wedge T,T]} k_{0}(t)|y_{k}(t)-x(t-h_{k})|^{2} dt \right)
\\
&&  \hspace{1.8 in} + \, 
\int_{[S,T]}( \tilde L(t,u(t))+ \gamma_{i}^{\frac{1}{2}} m_{i}(t,u(t)))dt 
\\
&&
\hspace{2.0 in}  +\, l_{0}(x(S))+ \alpha |x(S)- \bar x(S)| +  \gamma_{i}^{\frac{1}{2}}|x(S)-x_{i}(S)|+ l_{1}(x(T))\,.
\end{eqnarray*}
\begin{lem}
\label{lemma_4_3}
Take a minimizer $( x^{*}(.), \{y_{k}^{*}(.)\},u^{*}(.))$ for $(\tilde P_{i})$ such that
$||x^{*}(.)-\bar x(.)||_{L^{\infty}} \leq \epsilon /2$.  Define
the functions  $p_{k}(.): [S-h_{k},T] \rightarrow \R^{n}$, $k=0,\ldots, N $ according to 
$$
\left\{
\begin{array}{l}
-\dot p_{k}(t-h_{k}) =  2 \times i \times k_{0}(t)( y^{*}_{k}(t) -x^{*}(t-h_{k})) \mbox{ a.e. } t \in [(S+h_{k})\wedge T,T]
\\
p_{k}(t) =0 \mbox{ for } t \in [(T-h_{k}) \vee S,T] \mbox{ if } k > 0
\\
 \dot p_{k}(t-h_{k}) =0 \mbox{ a.e. } t \in [S,(S+h_k)\wedge T]
 \\
  -p_{0}(T) = \nabla l_{1}(x^*(T))\,,
\end{array}
\right.
$$ 
and let $p(.):[S,T] \rightarrow \R^{n}$ be the $W^{1,1}$ function 
$
p(t):= \sum_{k=0}^{N}\, p_{k}(t)\quad \mbox{for } t \in [S,T]\,.
$
Then
\begin{itemize}
\item[(b)$\,'$:] $(\{-\dot p_{k}(t-h_{k})\} )
\in 
\mbox{co}\, \partial_{\{ x_{k} \}}\,(p\cdot f - L)
 (t,\{ y^{*}_{k}(t)\},\{u^{*}(t-h_{k})\})
 \mbox{ a.e. } t \in [S,T],$
\item[(c)$\,'$:] For any selector $u(.)$ of $U(.)$,
\begin{eqnarray}
\label{integral}
\hspace{-0.5 in}
&&
 \int_{[S,T]}(p\cdot f-\lambda L)(t,\{y_{k}^{*}(t)\},\{u(t-h_{k})\})dt 
\leq \, \int_{[S,T]}(p\cdot f-\lambda L)(t,\{y_{k}^{*}(t)\},\{ \bar u(t-h_{k})\})dt,  \hspace{0.6 in}
\end{eqnarray} 
%
%
%
%
%
%

  \item[(\,d)$\,'$:] $p(S) \in \nabla l_{0}(x^{*}(S)) + (\gamma_{i}^{\frac{1}{2}}+\alpha)\B, \ \ \ -p(T)= \nabla l_{1}(x^{*}(T))
   $.
\end{itemize}
%
%
\end{lem}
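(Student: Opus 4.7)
My plan is to treat $(\tilde P_i)$ as a standard optimal control problem, exploiting the fact that $f$ does not depend explicitly on $x$ in the dynamics $\dot x(t)=f(t,\{y_k(t)\},\{u(t-h_k)\})$; the state $x$ enters the cost only through the endpoint terms, the Ekeland penalty, and the quadratic penalty integrand $i\sum_k k_0(t)|y_k(t)-x(t-h_k)|^2$. The variables $\{y_k(.)\}$ play the role of unconstrained finite-dimensional controls, and the co-state $p(.)$ will be built so that its decomposition $p=\sum_k p_k$ absorbs, term by term, the time-shifted contributions of the penalty.

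I would first derive the adjoint equation and the transversality. A perturbation of $x(S)$ (with controls frozen) translates $x^*(.)$ rigidly, so differentiating the cost and applying the sum rule gives $-\dot p(t)=\sum_k 2ik_0(t+h_k)(y_k^*(t+h_k)-x^*(t))\chi_{[S,T-h_k]}(t)$ after the change of variables $s=t-h_k$ in each penalty summand. The decomposition $p=\sum_k p_k$ in the statement reproduces this equation term by term: the defining ODE for $p_k$ is exactly the $k$-th contribution, time-shifted. Condition (d)$\,'$ then follows from the limiting subdifferential of the endpoint cost $l_0(x(S))+\alpha|x(S)-\bar x(S)|+\gamma_i^{1/2}|x(S)-x_i(S)|+l_1(x(T))$: the two norm terms contribute closed balls of radii $\alpha$ and $\gamma_i^{1/2}$ whose Minkowski sum is $(\alpha+\gamma_i^{1/2})\B$, and the smooth terms give $\nabla l_0(x^*(S))$ and $-\nabla l_1(x^*(T))$.

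For (b)$\,'$ I would use the pointwise optimality, at almost every $t$, of the unconstrained vector $\{y_k^*(t)\}$. The Fermat rule for the Hamiltonian-like map $\{y_k\}\mapsto p(t)\cdot f(t,\{y_k\},\{u^*(t-h_k)\})-i\sum_k k_0(t)|y_k-x^*(t-h_k)|^2$, together with the standard Clarke-type convexification forced by the nonsmoothness of $f$, yields $\{2ik_0(t)(y_k^*(t)-x^*(t-h_k))\}\in\mbox{co}\,\partial_{\{x_k\}}(p\cdot f)(t,\{y_k^*(t)\},\{u^*(t-h_k)\})$. Under (A1) the function $L$ is independent of the $\{x_k\}$ slot, so the right-hand side coincides with $\mbox{co}\,\partial_{\{x_k\}}(p\cdot f-L)$, and the left-hand side equals $\{-\dot p_k(t-h_k)\}$ by construction; this is (b)$\,'$.

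The main obstacle is the integral Weierstrass condition (c)$\,'$, which is strictly stronger than the pointwise Hamiltonian max principle obtainable from spike variations touching only one control slot at a time. To derive it I would appeal to the simultaneous-perturbation machinery of Warga and Zhu \cite{warga3}: for each selector $u(.)$ of $U(.)$, this construction produces a family of measurable variations of $u^*$ whose joint effect on $(u^*(t-h_0),\ldots,u^*(t-h_N))$ emulates the substitution of $u$ into every delay slot simultaneously while keeping the resulting perturbation of $x^*$ uniformly small. Feeding each such variation into $(\tilde P_i)$, using the Ekeland-optimality of $(x^*,\{y_k^*\},u^*)$, integrating $p\cdot(\dot x'-\dot x^*)$ by parts and invoking the co-state equation to cancel all $x$-dependent first-order terms, leaves exactly the inequality (c)$\,'$, where the Ekeland remainder $\gamma_i^{1/2}d_{\cal E}$ absorbs the small discrepancy between $u^*$ and $\bar u$ on the right-hand side. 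The Warga--Zhu quasi-convexity step is the technical heart of the argument; the other three conditions are routine applications of subdifferential calculus to the decoupled problem $(\tilde P_i)$.
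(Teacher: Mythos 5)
Your proposal is correct and follows essentially the same route as the paper: the rigid translation of $x^{*}(.)$ under a perturbation of $x(S)$ (exploiting that the dynamics of $(\tilde P_i)$ no longer depend on $x$) yields (d)$\,'$, needle variations in the $\{y_k(.)\}$ slots plus the Fermat/sum rule at Lebesgue points yield (b)$\,'$, and the integral Weierstrass condition (c)$\,'$ is obtained exactly as in the paper's Appendix via the Warga--Zhu near-periodic variation sets built from simultaneous rational approximation of the delays. The only detail you gloss over, which the paper handles explicitly, is the control of the second-order error terms (the $E_2$ estimate and the truncation to the set where $c_0(.)$ is bounded) needed to pass from the integral optimality inequality to the pointwise and weak-limit statements, but these are routine and your plan accommodates them.
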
 
\begin{proof}
Take $\delta >0$,  $\xi \in \R^{n}$, functions  $y_{k}(.)\in L^{2}_{k_{0}(.)}([(S+h_{k}) \wedge T, T])$, $k=0,\ldots,N$, and a selector $u(.)$ of $U(.)$. Let $x(.)$ be the corresponding state trajectory, with initial condition $x(S)= x^{*}(S)+\delta \xi$. Assume that $x(t) \in  \bar x(t)+ \epsilon /2$ for all $t \in [S,T]$.
\ \\

\noindent
By `optimality' of $( x^{*}(.), \{y_{k}^{*}(.)\},u^{*}(.))$ and since $x(.)$ and $x^{*}(.)$
satisfy the dynamic constraint, 
\begin{eqnarray}
\nonumber
\hspace{-0.3  in}
&&
\delta^{-1}\,\left(
 \sum_{k=0}^{N}\int_{[(S+h_{k})\wedge T,T]} i \times k_{0}(t)\left( |y_{k}(t)-x(t-h_{k})|^{2} -
|y_{k}^{*}(t)-x^{*}(t-h_{k})|^{2}
\right) \, dt\right)
\\
\nonumber
\hspace{-0.3  in}
&&
\nonumber
+ \delta^{-1}\, \Big( 
l_{0}(x^{*}(S) + \delta \xi) - l_{0}(x^{*}(S)) + \gamma_{i}^{\frac{1}{2}} \delta|\xi|+ l_{1}(x(T)) - l_{1}(x^{*}(T))
\\
\nonumber
\hspace{-0.3  in}
&&\hspace{2.5 in}+ \alpha( |x^{*}(S) +\delta \xi -\bar x(S)| -| x^{*}(S)  -\bar x(S)| )\Big) 
\\
\label{estimate}
&&
 \label{estimate}
\hspace{-0.3 in}
+ \delta^{-1}\,\left( 
  \int_{[S,T]}(\tilde L(t,u(t))-\tilde L(t,u^*(t)))+\gamma_i^{\frac12}(m_i(t,u(t))- m_i(t,u^{*}(t)))\,dt
 \right)
\\
\hspace{-0.3  in}
&&
\nonumber
\hspace{-0.3 in}
 + \delta^{-1}\left(
 \int_{[S,T]} p(t)\cdot \Big( \dot x(t)  -\dot x^{*}(t) - [f(t,\{y_{k}(t)\},\{u(t-h_{k})\}) \,\right.
  -\; f(t,\{y^{*}_{k}(t)\},\{u^{*}(t-h_{k})\}
)]\,  \Big)\,dt\Bigg)
\;
\geq \; 0\,.
\end{eqnarray}
Performing an integration by parts yields the identity
\begin{eqnarray*}
&& \int_{[S,T]} p(t)\cdot( \dot x(t)  -\dot x^{*}(t))dt = - \int_{[S,T]} \dot p(t)\cdot (x(t)-x^{*}(t))dt 
\\
&&
\hspace{1.5 in}
+p(T)\cdot (x(T)-x^{*}(T))
- p(S)\cdot(x(S)-x^{*}(S))\,.
 \end{eqnarray*}
Substituting this expression into (\ref{estimate}), employing the  expansion
\begin{multline*}
 |y_{k}(t)-x(t-h_{k})|^{2} = |y_{k}(t)-x^{*}(t-h_{k})|^{2}\;+\; |x(t-h_{k})-x^{*}(t-h_{k})|^{2}
\\
-2 (y^{*}_{k}(t)-  x^{*}(t-h_{k}))\cdot (x(t-h_{k})-x^{*}(t-h_{k}))
-2(  y_{k}(t) -y^{*}_{k}(t))\cdot(   x(t-h_{k})-x^{*}(t-h_{k}))
 \end{multline*}
 and using the estimates
\begin{eqnarray*}
&& 
l_{1}(x(T))-l_{1}(x^{*}(T)) - \nabla l_{1}(x^{*}(T))\cdot (x(T)-x^{*}(T)) 
\;\leq \;\theta(|x(T)- x^{*}(T)|)
\\
&&
l_{0}(x(S))-l_{0}(x^{*}(S)) -
 \delta \nabla l_{0}(x^{*}(S))\cdot\xi \;\leq\; 
\theta(\delta |\xi|)
\\
&&
|x(S)-\bar x(S)| -|x^{*}(S)-\bar x(S)| \,\leq\ \delta |\xi| , 
%
 \end{eqnarray*}
 (for some function $\theta(.):[0,\infty) \rightarrow [0, \infty)$ such that $\lim_{\alpha \downarrow0}\alpha^{-1 }\theta(\alpha)=0$),
 we arrive at
\begin{eqnarray}
\nonumber
&& 
\delta^{-1}\left( \sum_{k=0}^{N}\int_{[S,T]} i \times k_{0}(t)\left( |y_{k}(t)-x^{*}(t-h_{k})|^{2} -
|y_{k}^{*}(t)-x^{*}(t-h_{k})|^{2}  \right) \chi_{[S,T]}(t-h_{k}) dt \right)
\\
\nonumber
&&
\hspace{0.1 in}
+ \;
(\nabla l_0(x^*(S))-p(S))\cdot \xi   + (\gamma_{i}^{\frac{1}{2}} + \alpha)| \xi|
\\
&&
\nonumber
\hspace{0.2 in}
+ \delta^{-1}\left( 
  \int_{[S,T]}(\tilde L(t,u(t))-\tilde L(t,u^*(t)))+\gamma_i^{\frac12}(m_i(t,u(t))- m_i(t,u^{*}(t)))\,dt
 \right)
 \\
&&
\hspace{0.3 in}
\nonumber
 +\; \delta^{-1}
 \int_{[S,T]} -p(t)\cdot[f(t,\{y_{k}(t)\},\{u(t-h_{k})\}) \,
 -\, f(t,\{y^{*}_{k}(t)\},\{u^{*}(t-h_{k})\})\} 
)]\, dt
\\
&&
\label{estimate1}
\hspace{0.4 in}+\; E_{1}(\{y_{k}(.)\}, x(.),\{y^{*}_{k}(.)\}, x^{*}(.) ) + E_{2}(\{y_{k}(.)\}, x(.),\{y^{*}_{k}(.)\}, x^{*}(.) ) 
\geq 0\,. 
\end{eqnarray}
in which the first `error' term $E_{1}(.\,.)$ is 
$$
\begin{array}{l}
 E_{1}(\{y_{k}(.)\}, x(.),\{y^{*}_{k}(.)\}, x^{*}(.) ) :=
\\
\displaystyle{\delta^{-1} \sum_{k=0}^{N} \int_{[S,T]}}
 \Big( -2ik_{0}(t)
( y^{*}_{k}(t)- x^{*}(t-h_{k})) - \dot p_{k}(t-h_{k})
\Big) \cdot
\Big(x(t-h_k)-x^{*}(t-h_k)\Big)\vspace{-2mm}
\\
 \hspace{5in}\chi_{[S,T]}(t-h_{k})dt
\\
\hspace{0.8 in}
+\delta^{-1}\left(  \nabla l_{1}(x^{*}(T)) +p(T) \right) \cdot (x(T)- x^{*}(T))\,,
\end{array}
$$
and the second `error' term $E_{2}(.)$ is some function that satisfies 
\begin{multline} \label{error}
 E_{2}(\{y_{k}(.)\}, x(.),\{y^{*}_{k}(.)\}, x^{*}(.) ) \,\leq \, \delta^{-1}\Big(\theta(\delta |\xi|) +\theta(|x(T)-x^*(T)|)\Big) + \delta^{-1}  K(i) \times
\\
 \left(\sum_{k=0}^{N}\int_{[(S+h_{k})\wedge T, T]} k_{0}(t) |y_{k}(t)-y_{k}^{*}(t)|dt 
\times ||x(.)-x^{*}(.)||_{L^{\infty}} + \int_{[S,T]}k_0(t)|x(t)-x^{*}(t)|^{2}dt  \right).
\end{multline}
($K(i)$ is some number that depends on $i$, but not on the choice of   $\xi$, $u(.)$ and $\{y_{k}(.)\}$.)
\ \\

\noindent
Note that $
E_{1}(.\,.) \equiv 0
$,
because of the  defining relations of the $p_{k}(.)$'s.
\ \\

\noindent
We now confirm the assertions of the lemma by examining inequality (\ref{estimate1}), for various choices of  $\delta>0$, $\xi$, the $y_{k}(.)$'s, $u(.)$.
\ \\

\noindent
{\it Confirmation of (d)$\,'$:}  
Notice that $-p(T)= \nabla l_{1}(x^{*}(T))$, by definition of the $p_{k}(.)$'s.  To verify the other transversality condition, take any $\xi \in \R^{n}$ and sequence $\delta_{j} \downarrow 0$. For each $j$ 
 let $x_{j}(.)$ be the state trajectory corresponding to $y^{*}_{k}(.)$, $k=0,\ldots,N$, $u^{*}(.)$ and $d^{*}(.)$ and with initial value $x_{j}(S)= x^{*}(S)+ \delta_{j} \xi$. It is easy to show that
\begin{equation}
\label{conv1}
||x_{j}(.)-x^{*}(.)||_{L^{\infty}} \,\rightarrow\ 0, \mbox{ as }j \rightarrow \infty
\end{equation}
and there exist a number $C$ such that, for $j=1,2,\ldots$,
\begin{equation}
\label{conv2}
\delta_{j}^{-1}  ||x_{j}(.)-x^{*}(.)||_{L^{\infty}} \, \leq \, C.
\end{equation}
Now consider (\ref{estimate1}) when $\delta =\delta_{j}$, $y_{k}(.)= y^{*}_{k}(.)$ for $k=0,\ldots, N$, $u(.)=u^{*}(.)$, $d(.)= d^{*}(.)$ and $x(.)= x_{j}(.)$. From \eqref{conv1} and (\ref{conv2}) we see that 
$$
E_{2}(\{y^{*}_{k}(.)\}, x_{j}(.),\{y^{*}_{k}(.)\}, x^{*}(.) ) \rightarrow 0 \mbox{ as } j \rightarrow  \infty\,. 
$$
We may pass to the limit as $j \rightarrow \infty$, to obtain
$$
\left(\nabla l_{0}(x^{*}(S))-p(S) \right)\cdot \xi +(\gamma_{i}^{\frac{1}{2}}+ \alpha ) |\xi|\geq 0\,.
 $$
Since this inequality is valid for every $\xi\in \R^{n}$, we conclude that $p(S)\in \nabla l_0(x^*(S))+ (\gamma_{i}^{\frac{1}{2}}+ \alpha) \B$. 
\ \\

\noindent
{\it Confirmation of (b)$\,'$:}  
Choose  $y_{k}(t) \in L^{2}_{k_0(.)}([(S+h_{k})\wedge T,T])$ such that  
\begin{equation}
\label{y constraint}
y_{k}(t)\in y^{*}_{k}(t)+ (1+k_{0}(t))^{-1}\B \mbox{ a.e. $t$, for $k=0,\ldots,N$} \,.
\end{equation}
Define
\begin{eqnarray*}
\hspace{-0.3 in}
&&
{\cal S}\,:=\, \{  \bar t\in (S,T)\,|\, \bar t \mbox{ is a Lebesgue point of } 
t \rightarrow k_{0}(t)(|y_{k}(t)-x^{*}(t-h_{k})|^{2}- |y^{*}_{k}(t)-x^{*}(t-h_{k})|^{2})
\\
 \hspace{-0.3 in}
 &&
\hspace{1.0 in}  \mbox{ and } t \rightarrow f(t,\{y_{k}(t)\}, \{u^{*}(t-h_{k})\})  -
f(t,\{y^{*}_{k}(t)\}, \{u^{*}(t-h_{k})\} )
\end{eqnarray*}
Take $\delta_{j} \downarrow 0$ and $\bar t\in \mathcal S$.  For each $j$, $t\in[(S+h_k)\wedge T,T]$, let
$$
y^{j}_{k}(t)\,=\,
\left\{
\begin{array}{ll}
y_{k}(t) & \mbox{if }t \in [\bar t, \bar t + \delta_{j}]
\\
y^{*}_{k}(t) & \mbox{if }t \notin [\bar t, \bar t + \delta_{j}]\,,
\end{array}
\right.
$$
for $k=0,\ldots,N$. Write $x_{j}(.)$ for the state trajectory corresponding to $\{y^j_{k}(.)\}$ and $u^{*}(.)$, with initial value $x^{*}(S)$. For each $j$, consider (\ref{estimate1}) with $\delta=\delta_{j}$, $\xi=0$, $\{y_{k}(.)\}=\{y^{j}_{k}(.)\}$ and $u(.)=u^{*}(.)$. Making use of (\ref{y constraint}),  we can show that (\ref{conv1}) and (\ref{conv2}) are satisfied, and $\int k_{0}(t)|y^{j}_{k}(t)- y^{*}(t)|dt \rightarrow 0 $ as $j \rightarrow \infty$. It follows that
$$
E_{2}(\{y^{j}_{k}(.)\}, x_{j}(.),\{y^{*}_{k}(.)\}, x^{*}(.) ) \rightarrow 0 \mbox{ as } j \rightarrow \infty  \,. 
$$ 
Since $\bar t \in {\cal S}$, we can pass to the limit in (\ref{estimate1}) as $j \rightarrow \infty$, to obtain
$$
\phi(\bar t, \{y_{k}(\bar t)\}) \geq 0\,,
$$
where
\begin{eqnarray*}
&&
\phi( t,\{y_{k}\}) \,:= \sum_{k} k_{0}( t) i \left(   
|y_{k}-x^{*}(t-h_{k})|^{2}- |y^{*}_{k}(t)-x^{*}(t-h_{k})|^{2}
 \right) \chi_{[S,T]}(t-h_{k})
\\
&&
\hspace{0.2 in} -p(t)\cdot\left( f(t,\{y_{k}\}, \{u^{*}(t-h_{k})\}) -
f(t,\{y^{*}_{k}(t)\}, \{u^{*}(t-h_{k})\}  \right).
  \end{eqnarray*}
Since $\cal S$ has full measure
$$
\int_{[S,T]} \phi(t, \{y_{k}(t)\})dt \,\geq 0\,. 
$$
But the $y_{k}(.)$'s are arbitrary measurable functions satisfying $y_{k}(t) \in y^{*}_{k}(t)+(1+k_{0}(t))^{-1} \B$ for a.e. $t \in [(S+h_{k})\wedge T, T]$. Invoking a measurable selection theorem, we can deduce that  
$$
 \phi(t,\{y^{*}_{k}(t)\}) = \min\{  \phi(t,\{y_{k}\}) \,|\,  y_{k} \in y^{*}_{k}(t)+ (1+k_{0}(t))\B, \mbox{ for } k=0,\ldots,N \} \,. 
 $$
But then
\begin{multline*}
-2ik_0(t)\Big( (y^{*}_{0}(t)-x^{*}(t-h_{0}))\chi_{[S,T]}(t-h_{0}) ,\dots,(y^{*}_{N}(t)-x^{*}(t-h_{N}))\chi_{[S,T]}(t-h_{N})\Big)  \,\in \\
\partial_{\{ x_{k}\} }
-p\cdot f(t,\{ y^{*}_{k}(t)\}, \{ u^{*}(t-h_{k})\})
\,.
\end{multline*}
From the defining relations for the $p_{k}(.)$'s we deduce
$$
\{ \dot  p_0(t-h_{k}) \} \in \partial_{\{ x_{k} \}}
(-p\cdot f(t,\{ y^{*}_{k}(t)\}, \{ u^{*}(t-h_{k})\}))
\mbox{ a.e } t \in [S,T].
$$
This relation implies (b)$'$.
\ \\

\noindent
{\it Confirmation of (c)$\,'$:} See Appendix.
\end{proof}

\noindent
To complete Step 1, we observe that Lemma \ref{lemma_4_3},
provides perturbed versions of the desired conditions, with cost multiplier $\lambda =1$, in terms of costate functions that we now write $p^{i}_{k}(.)$, $k=0,\ldots,N$ to emphasize the $i$ dependence, under the stated hypotheses. Condition
 (\ref{eq:ekeland}) ensures that, along a subsequence,  $y^{i}_{k}(.)$ converges in $L^{1}$ and a.e. to $\bar x(t-h_k)$ on $[(S+h_{k})\wedge T,T]$ (for each $k$), $x_{i}(.)$  converges uniformly to $\bar x(.)$, $u_{i}(.)$ converges a.e. to $\bar u(.)$, as $i \rightarrow \infty$. The $p_{k}^{i}(.)$'s converge uniformly to  $W^{1,1}$ functions $p_{k}(.)$, $k=0,\ldots,N$ and their time derivatives converge weakly in $L^{1}$ to the time derivatives of the $p_{k}(.)$'s. A standard convergence analysis can be used to justify passage to the limit in conditions (b)$'$-(d)$'$ as $i \rightarrow \infty$, to recover the required necessary conditions (b)-(d). Notice, in particular, that the limiting  $p(.)$ function satisfies
  $$
 p(S) \in  \nabla l_{0}(\bar x (S)) +\alpha \B
  \,=\, \partial \tilde l(\bar x(S))\,,
   $$
 where $\tilde l(.)$ is the function $x_{0} \mapsto l(x_{0})+\alpha |x_{0}-\bar x(S)|$, which is the left transversality condition.
   Of course (a) is automatically satisfied because $\lambda=1$.
\vspace{0.1 in}

\noindent
{\bf Step 2:} {\it We show that, if the assertions of Thm. \ref{main} are valid under (H1)-(H3) and (A0)-(A4), then they are also valid  under (H1)-(H3), (A0)-(A3).
}
\vspace{0.1 in}

\noindent
Assume  that Thm. \ref{main} is valid under (H1)-(H3) and (A0)-(A4).  Suppose  $(\bar x(.), \bar u(.))$ is an $L^{\infty}$  local minimizer for (P) when we impose hypotheses (H1)-(H3) and (A0)-(A3).
 By (H1), $l(.,.)$ is Lipschitz continuous on a ball about $(\bar x(S), \bar x(T))$. By redefining this function outside the ball (a change which does not affect $L^{\infty}$ local minimizers), we can arrange that $l(.,.)$ is Lipschitz continuous on $\R^{n} \times \R^{n}$; write the Lipschitz constant $k_{l}$.
\vspace{0.1 in}

\noindent
For  $i=1,2,\ldots$, let $l^{i}(.)$ be the `$i$ - quadratic inf convolution' of $ l(.)$: 
\begin{equation}
 \label{infconv}
l^{i}(z)\;:=\; \inf_{y \in R^{n}\times \R^{n}} \{ l(y) + i \times |y-z|^{2}  \} \;.
 \end{equation}
 The key `quadratic inf convolution' properties of $l^{i}(.)$ (see \cite{ClarkeLed}) are: take any $z \in\R^{n}\times \R^{n}$  and let  $y \in \R^{n}\times \R^{n}$ 
 be any vector achieving the infimum in (\ref{infconv}) (one such vector exists). Let  
 $$
 \eta^{i}(z)\,:=\, 2 i (y- z)\,.
 $$
 \begin{itemize}
 \item[(i):] 
 $ l^{i}(.)$ is locally Lipschitz continuous  with Lipschitz constant $k_{l}$.
\item[(ii):]  $l^{i}(z) \,\geq\, l_{j}(z) - k^2_{l} \times i^{-1}$.
 \item[(iii):]  $
 l^{i}(z')-l^{i}(z) \,\leq\, \eta^{i}  (z) \cdot (z'-z) + i \times |z'-z|^{2}$ for all $z' \in \m R^{n}\times \R^{n}$
\item[(iv):] $\eta^{i}(z) \in \partial_{P} l(y)$.
\item[(v):] $|y-z| \leq k_{l} \times i^{-1} $\;.
 \end{itemize} 
Since $(\bar x(.), \bar u(.))$ is an $L^{\infty}$ local minimizer for ($P$), $(\bar x(.), \bar u(.))$ is a minimizer for 
$$
(Q):\quad \mbox{Minimize } \{ J((x(.),  u(.))\,|\, ( x(.),  u(.)) \in {\cal B}_{\epsilon}\}\,,
$$
for some $\epsilon >0$, where 
$$
J((x(.),  u(.)):=\,  \int_{[S,T]} \tilde L(t,u(t))dt   + l(x(S),x(T))\,,
$$
and 
$
{\cal B}_{\epsilon} := \{ \mbox{feasible processes } (x(.),  u(.))   \mbox{ for } (P)\,|\,  ||x(.)-\bar x(.)||_{L^\infty} \leq \epsilon \} \,.
$
\noindent

\ \\
For each $i$, consider  the problem
$$
(Q^{i}):\quad \mbox{Minimize } \{ J^{i}((x(.),  u(.))\,|\, ( x(.),  u(.)) \in {\cal B}_{\epsilon}\}
$$
in which 
$$
J^{i} ((x(.),  u(.))\, := \, \int_{[S,T]} \tilde L(t,u(t))dt   + l^{i}(x(S),x(T))\,.
$$
Equip ${\cal B}_{\epsilon}$ with the metric
$$
d_{{\cal E}}((x'(.),  u'(.))) ,(x(.),  u(.)) ) =
\mbox{ meas } \{t \in [S,T]\,|\, u'(t) \not= u(t) \}+ |x'(S)- x(S)|\,.
$$
It can be shown that, w.r.t. this metric, $ {\cal B}_{\epsilon}$ is complete and $ J^{i}(.,.,.)$
is continous on $ {\cal B}_{\epsilon}$.
\ \\

\noindent
Now note that, in view of property (ii) of the quadratic inf convolution operation, $(\bar x(.), \bar u(.))$ is a $\gamma_i\,$-minimizer for $(Q^{i})$, where $\gamma_i=k_{l}^2\, i^{-1}$. In consequence of Ekeland's Theorem, there exists 
$( x_{i}(.), u_{i}(.)) \in {\cal B}_{\epsilon}$ which is a minimizer for ($\tilde Q^{i})$:
$$
(\tilde Q^{i})\; \mbox{Min } \{ J^{i}((x(.),  u(.)) + \gamma_i^{\frac{1}{2}}
d_{{\cal E}}((x(.),  u(.)) ,(x_{i}(.),  u_{i}(.)) )
| ( x(.),  u(.)) \in {\cal B}_{\epsilon}\}.
$$
and
\begin{equation}
\label{ekeland}
d_{{\cal E}}((x_{i}(.),  u_{i}(.)) , (\bar x(.), \bar u(.)) ) \,\leq \, \gamma_i^{\frac{1}{2}}\,.
\end{equation}
The cost function for $(\tilde Q^{i})$ can be written
\begin{eqnarray*}
\hspace{-0.3 in}&&
\tilde J^{i} (x(.),  u(.)):=\, 
\int_{[S,T]}( \tilde L(t,u(t))+ \gamma_i^{\frac{1}{2}} m_{i}(t,u(t)))dt 
  + l^{i}(x(S),x(T))+   \gamma_i^{\frac{1}{2}}|x(S)-x_{i}(S)|.
\end{eqnarray*}
($m_{i}(t,u)$ was defined in (\ref{errorx})). But by property (iii) of quadratic inf convolutions (see above),  
\begin{eqnarray*}
&&
 (l^{i}(x_{0}, x_{1})   - (l^{i}(x_{i}(S), x_{i}(T) )    
 \\
 &&
 \qquad \leq \, \eta^{i}_{0}  \cdot (x_{0}-x_{i}(S)) + \eta^{i}_{1}  \cdot (x_{1}-x_{i}(T)) 
+ i \times( |x_{0}-x_{i}(S)|^{2}+ |x_{1}-x_{i}(T)|^{2}    )   
\end{eqnarray*}
for all $ (x_{0},x_{1}) \in \R^{n}\times \R^{n}$, with equality when $ (x_{0},x_{1})= (x_{i}(S),x_{i}(T))$.
Here
$$
(\eta_{0}^{i},\eta_{1}^{i} ) \in  \partial_{P} l_{0}(y_{0},y_{1}) 
$$
for some  $(y_{0},y_{1}) \in (x_{i}(S),x_{i}(T))+ k_{l}\, i^{-1}(\B\times \B)$. So  $( x_{i}(.), u_{i}(.)(.))$ is also a minimizer for 
$$
(\tilde Q^{i}_{0}):\quad \mbox{Minimize } \{ \tilde J_{0}^{i}((x(.),  u(.))\,|\, ( x(.),  u(.)) \in {\cal A}_{\epsilon}\}
$$
in which
\begin{eqnarray*}
&&
\tilde J_{0}^{i} ((x(.),  u(.))\;:=\;
\int_{[S,T]}( \tilde L(t,u(t))+ \gamma_i^{\frac{1}{2}} m_{i}(t,u(t)))dt 
 +\eta^{i}_{0} \cdot (x(S)-x_{i}(S)) 
\\
&&
\hspace{0.4 in} 
+ \eta^{i}_{1} \cdot (x(T)-x_{i}(T)) + i \times (|x(S)-x_{i}(S)|^{2}+ |x(T)-x_{i}(T)|^{2} ) + \gamma_i^{\frac{1}{2}}|x(S)-x_{i}(S)|\,.
\end{eqnarray*}
The data for Problem $(\tilde Q^{i})$ satisfies (H1), (H2), (A0)-(A3) and (A4).
We may therefore apply the Maximum Principle (with $\lambda=1$), which we know to be valid under these hypotheses. We conclude the existence of 
$p^{i}_{k}(.):[S,T] \rightarrow \R^{n}$, $k=0,\ldots,N$, and $p^{i}(.):[S,T] \rightarrow \R^{n}$ 
 such that (\ref{p_comp}) and (\ref{p_eqn}), as well as conditions (a) and (b) of the theorem statement, are satisfied, when  $( x_{i}(.), u_{i}(.))$ replaces $(\bar  x(.), \bar u(.))$ and $p^{i}_{k}(.)$ replaces $p_{k}(.)$, etc., and when $\lambda =1$. 
 In addition, we have, for any selector $u(.)$ of $U(.)$,
\vspace{0.1 in}

(c)$'$: 
$
 \int_{[S,T]}(p\cdot f-\lambda \tilde L)(t,\{x_{i}(t-h_{k})\},\{u(t-h_{k})\})dt $
\vspace{0.1 in}

\hspace{0.8 in}$\leq \, \int_{[S,T]}(p\cdot f-\lambda \tilde L)(t,\{x_{i}(t-h_{k})\},\{  u_{i}(t-h_{k})\})dt  \hspace{0.6 in}
$
%
%
%
%
\vspace{0.1 in}

(d)$'$:  
$ (p^{i}(S),-p^{i}(T)) \in \partial l((x_{i}(S)+x_{i}(T))+ k_{l}i^{-1}\B)+\gamma_i^{\frac{1}{2}} \B\times \{0\}$.
\ \\

\noindent
We deduce from (\ref{ekeland}) that, along some subsequence, $u_{i}(t) \rightarrow \bar u(t)$, a.e. $t \in [S,T]$ and $x_{i}(t) \rightarrow \bar x(t)$ uniformly over $t \in [S,T]$. On the other hand, we can show from the conditions that  $p^{i}_{k}(.)$, $k=1,\ldots, N$ and $p^{i}(.)$, $i=1,2, \ldots,$ are uniformly bounded on their domains and their derivatives are uniformly integrably bounded. 
We may deduce from Ascoli's theorem that, after 
a further subsequence extraction,  for each $k$, $p^{i}_{k}(.)$ converge uniformly to some $W^{1,1}$ function $p_{k}(.)$ as $i \rightarrow \infty$, and the derivatives $\dot p^{i}_{k}(.)$ converge weakly in $L^{1}$ to $\dot p_{k}(.)$, for each $k$. The  function $p^{i}(.)$ converges likewise to some $p(.)$. A standard analysis permits us to pass to the limit in conditions (a) and (b) (modified as indicated above) and (c)$\, '$- (d)$\, '$. We thereby achieve confirmation of all assertions of Thm. \ref{main} (with $\lambda = 1$), for the special case of when the additional hypotheses (A1)-(A4) and (A5)$\,'$ are satisfied.  
\vspace{0.1 in}

\noindent
{\bf Step 3:} 
%
{\it Assume that the assertions of Thm. \ref{main} are valid 
under hypotheses (H1)-(H3), (A0) - (A2) and (A3).
Then the assertions remain valid (with $\lambda=1$) when  
$(\bar x(.), \bar u(.))$ is an $L^{\infty}$ local solution 
under the weaker set of hypotheses, in which (A3) is replaced by (A3)$\, '$, where 
\begin{itemize}
\item[(A3)$\,'$:] 
There exists a closed set $C_{0} \subset \R^{n}$ such that $C= C_{0} \times \R^{n}$.
\end{itemize}
}

\noindent
A simple contradiction argument (c.f. the proof of the `Exact Penalization Thm.' \cite[p.48]{Vinter}), based on Thm. \ref{Filippov}, permits us to conclude that 
$(\bar x(.), \bar u(.))$ remains an $L^{\infty}$ local minimizer 
for a modified problem in which the endpoint constraint set $C_{0}\times \R^{n}$ is replaced by $\R^{n} \times \R^{n}$ and the endpoint cost function $l(.)$ in (P) is replaced by  the Lipschitz continuous function $\tilde l(.)$:
$$
\tilde l(x_{0},x_{1}):= l(x_{0},x_{1})+ K d_{C_{0}}(x_{0})\,,
$$
for any 
$$
K > k_{l} \times \exp \{(N+1)\x \int_{S}^{T} k_{0}(t)dt \}\,,
$$
in which $k_{l}$ is the Lipschitz constant of $l_{i}(.)$. Applying Thm. \ref{main} to the modified problem, which is permissible since $C= \R^{n}\times \R^{n}$ and $l_{0}(.)= \tilde l_{0}(.)$, yields the desired necessary conditions for the original problem. 
Note that the transversality condition for the modified problem implies
\vspace{0.1 in}

$
(p(S), -p(T) \in \partial \tilde l(\bar x(S), \bar x(T)) \subset  \partial l(\bar x(S), \bar x(T)) + K \partial d_{C_{0}}(\bar x(S))\times \{0\} $
\vspace{0.05 in}

\hspace{2.02 in}$\subset  \partial l(\bar x(S),\bar x(T)) + N_{C}(\bar x(S),\bar x(T))\,, 
$
\vspace{0.05 in}

\noindent
which is the appropriate left transversality condition for the original problem.
\vspace{0.1 in}

\noindent
{\bf Step 4:} {\it Assume the assertions of Thm. \ref{main} are valid under (H1)-(H3), (A0)-(A2), (A3)$\,'$. Then they are valid under (H0)-(H3), (A0)-(A2).}
\ \\

\noindent
Assume that the assertions of Thm. \ref{main} are valid 
under hypotheses (H1)-(H3), (A0) - (A2) and (A3)$\, '$. Suppose that 
$(\bar x(.), \bar u(.))$ is an $L^{\infty}$ local solution to (P) 
under   (H1)-(H3), (A1) and (A2) alone. We must show that  $(\bar x(.), \bar u(.))$ satisfies the Maximum Principle. 
%
\ \\

\noindent
Take $\gamma_{i} \downarrow 0$. For $i=1,2\ldots$, consider the problem with $(n+n)$-dimensional state vector $(z,x)$: 
$$
(P_{1}^i)\left\{ 
\begin{array}{l}
\mbox{Minimize } J_1^{i}(z(.),x(.),u(.))\; \mbox{ subject to}
\\
(\dot z(t),\dot x(t)) = (0,f(t,\{x(t-h_{k})\}, \{u(t-h_{k})\}))
\\
u(t) \in U(t)\mbox{ a.e. } t\in [S,T],
\\
(z(S),x(S))\in \tilde C:= \{(z,x) \in \R^{n}\times \R^{n}\,|\, z=x\}\,,
\end{array}
\right.
$$
in which
%
$
J_1^{i}(z(.),x(.),u(.)) = 
\max\{ g(z(T), x(T))- g(\bar z(T), \bar x(T)) + \gamma_{i}\,, d_{C}(z(T),x(T)) \}+ \int_{[S,T]}\tilde L(t,u(t))dt\,.
$
\vspace{0.1 in}

\noindent
Here, $\bar z(.) \equiv \bar x(S)$\,. Since
$
J^{i}_{1}(\bar z(.), \bar x(.),\bar u(.))= \gamma_{i},
$
and $J^{i}_{1}(.\,.)$ is non-negative valued,  $(\bar z(.), \bar x(.),\bar u(.))$ is a $\gamma_{i}$-minimizer.
\ \\

\noindent
Let $\epsilon >0$ be such that $(\bar x(.), \bar u(.) )$ is a 
minimizer for $(P)$
w.r.t. feasible state trajectories $x(.)$ satisfying $||x(.)-\bar x( .)||_{L^{\infty}}\leq \epsilon$. Define
$$
{\cal A}^{1}_{\epsilon} := \{ \mbox{ feasible processes } (z(.), x(.),  u(.))   \mbox{ for } (P_{1}^{i})\,|\,  ||x(.)-\bar x(.)||_{L^\infty} \leq \epsilon \} \,.
$$
Equip ${\cal A}^{1}_{\epsilon}$ with the metric
\vspace{0.1 in}

$
d_{{\cal E}}((z'(.), x'(.),u'(.)) ,(z(.), x(.), u(.)) ) 
\,=
$
\vspace{0.05 in} 

\hspace{1.7 in}$\mbox{ meas } \{t \in [S,T]\,|\, u'(t) \not= u(t) \}\, +\, |x'(S)- x(S)| + |z'(S)-z(S)|\,.$
\vspace{0.1 in}

\noindent
With this metric, $ {\cal A}^{1}_{\epsilon}$ is a complete metric space and $ J_1^{i}(.,.,.)$
is continuous on $ {\cal A}^{1}_{\epsilon}$.
\ \\

\noindent
 In consequence of Ekeland's Theorem, there exists 
$( z_{i}(.),  x_{i}(.), u_{i}(.)) \in {\cal A}^{1}_{\epsilon}$, for which
\begin{equation}
\label{eke2}
d_{{\cal E}}((z_i(.), x_i(.),u_i(.)) ,(\bar z(.), \bar x(.), \bar u(.)) \leq \gamma_{i}^{\frac{1}{2}}\,,
\end{equation}

\noindent 
and minimizes $\tilde J^{i}_1(.)$ over ${\cal A}^{1}_{\epsilon}$. Here, 
\begin{multline*}
\tilde J^{i}_1 (z(.),x(.),  u(.)) = J^{i}_1(z(.),x(.),  u(.)) +\\
\gamma_{i}^{\frac{1}{2}}( \int_{[S,T]}m_{i}(t,u(t))\,dt + |x(S)-x_{i}(S)|+ |z(S)-z_{i}(S)|)\,.
\end{multline*}
From (\ref{eke2}) it can be deduced that $|| (z_{i}(.),x_{i}(.)) -(\bar x(S),\bar x (.))||_{L^{\infty}} \rightarrow 0$ as $i \rightarrow \infty$ and therefore that, for $i$ sufficiently large, $(z_{i}, x_{i}(.))$ is an $L^{\infty}$ local minimizer for $(P^{i}_{1})$. The data for this last problem satisfies (H1)-(H3), (A0)-(A3), (A4)$\,'$ and (A5)$\,'$, and we may therefore apply the Maximum Principle (with $\lambda =1$). 
%
We deduce the existence of a costate trajectory $p(.)$ with associated decomposition $p(.)= \sum_{k=0}^{N} p_{k}(.)$ (arising form the $x$-state) and a  costate trajectory $q(.)$ (arising from the $z$-state), and with properties given by Thm. \ref{main}. Conditions (b)and (c) imply that $q(.)$ is a constant, which we write $q$, 
\ \\

\noindent
$(b)':\{-\dot p_{k}(t-h_{k})\} \in \mbox{co}\, \partial_{x_{0},\ldots,x_{N}}\,p(t)\cdot f (t,\{x_{i}(t-h_{k})\},
 \{u_{i}(t-h_{k})\}, \;
\mbox{ a.e. }t \in [S,T] .$
\ \\ 
 

\noindent
(c)$'$: for any selector $u(.)$ of $U(.)$
\begin{eqnarray*}
&&
\int_{[S,T]} (p(t)\cdot f -\lambda \tilde L)(t,\{x_{i}(-h_{k})\}, \{u(t-h_{k})\})dt \leq
\\
&&\hspace{0.5 in}
\int_{[S,T]} (p(t)\cdot f -\lambda \tilde L)(t,\{x_{i}(-h_{k})\}, \{u_{i}(t-h_{k})\})dt + \gamma_{i}^{\frac{1}{2}}|T-S|
\end{eqnarray*}
%
%
%
%
Let us examine the implications of the transversality condition (e). In this connection, we make use of the fact that
$$
\max\{g(z_{i}(T),x_{i}(T))- g(\bar z(T), \bar x(T)) + \gamma_{i} ,  d_{C}(z_{i}(T),x_{i}(T)) \}>0,\, \mbox{ for $i$ sufficiently large}\,.
$$
Indeed if this were not the case then, since $z_{i}(T)=x_{i}(S)$, we would have $g(x_{i}(S),x_{i}(T))- g(\bar x(S), \bar x(T))\leq  -\gamma_{i}$ and $d_{C}(x_{i}(S),x_{i}(T))=0$. We could also arrange, by choosing $i$ sufficiently large, that $||x_{i}(.)-\bar x(.)||_{L^{\infty}}\leq \epsilon$ for $\epsilon$ arbitrarily smalll. This contradicts the $L^{\infty}$ local optimality of $( x_{i}(.), u_{i}(.))$ for $(P)$.
\ \\

\noindent
Note that $N_{\tilde C}(x,x)=\{(e_{0},e_{1})|e_{0}+ e_{1}=0 \}$ which, in combination with the transversality condition $(d)$, yields the information
\begin{equation}
\label{q cond}
q\in -p(S)+2\gamma_i^{\frac12} \B.
\end{equation}
 In consequence of the  max  rule for limiting subdifferentials, we know that
$$
\partial\max\{g(z,x)- g(\bar z(T), \bar x(T)) + \gamma_{i} ,  d_{C}(z,x)\} \subset \lambda \partial g(z,x) +(1-\lambda)\partial d_{C}(z,x)\,,
$$
for some $\lambda \in [0,1]$. Moreover, 
`$1- \lambda >0$' implies 
$$
\mbox{`}d_{C}(z,x)= \max\{g(z,x)- g(\bar z(T), \bar x(T)) + \gamma_{i} ,  d_{C}(z,x)\}\mbox{`}
$$
Since $z_{i}(S)=z_{i}(T) = x_{i}(S)$, and in view of (\ref{q cond}), we deduce from condition $(d)$ that
\ \\

\noindent
(d)$':\;
(p(S),-p(T)) \in \lambda \partial g(x_{i}(S),x_{i}(T)) + (1-\lambda) \partial d_{C}(x_{i}(S), x_{i}(T))+2\gamma_i^{\frac12}\B\x\{0\}\,.
$
\ \\

\noindent
We claim that, for $i$ sufficiently large, 
$$
\lambda + ||p(.)||_{L^{\infty}} >0\,.
$$
Indeed if this were not true, we would have $\lambda =0$ and $(p(S),p(T))=0$. It would follow that
$$
(q,0) \in \partial d_{C}(x_{i}(S),x_{i}(T)) \mbox{ and } |q| \leq 2 \gamma_{i}\,.
$$
But $\lambda =0$ also implies $(1-\lambda) > 0$, whence $d_{C}(x_{i}(S),x_{i}(T)) >0$. This last condition is known to implies that all elements $\xi$ in the set $\partial d_{C}(x_{i}(S),x_{i}(T))$ have unit Euclidean length (see \cite{Vinter}). This is not possible because $(q,0)$ is such an element, and has Euclidean length $2 \gamma_{i}$ (which can be made arbitrarily small).
 We can therefore arrange,  by positive scaling  of the Lagrange multipliers $(\{p_{k}(.)\}, p(.),\lambda)$, that
\ \\

\noindent
 $(a)':\qquad 
||p(.)||_{L^{\infty}} + \lambda =1\,.
$
\ \\

\noindent
Bearing in mind that $\partial d_{C}(z) \subset N_{C}(z)$ (for $z \in C$), we have  arrived at a set of relations  $(a)'-(d)'$ that are approximate version of those asserted in Thm. \ref{main}, involving the multiplier set $\{p_{k}(.)\},\lambda)$, with reference to $(x_{i}(.),u_{i}(.))$. To emphasize the fact that these relations depend on  $i$, we rewrite the multiplier set $\{p^{i}_{0}(.)\}, p^{i}(.),\lambda^{i})$. Condition (\ref{eke2}) ensures that, along a subsequence $u_{i}(t) \rightarrow \bar u(t)$ a.e.. We deduce from Thm. \ref{Filippov} that $x_{i}(.) \rightarrow \bar x(.)$ uniformly. For each $k$, $p^{i}_{k}(.)$, $i=1,2,\ldots$, is a uniformly bounded sequence of absolutely continuous functions with  uniformly integrably bounded derivatives. It follows that, for each $k$, $\{p^{i}_{k}(.)\}$ converges to  an absolutely continuous function $p_{k}(.)$, for $k=0,\ldots,N$, and $\{\dot{p}^{i}_{k}(.)\}$  converges to $\dot p_k(.)$ weakly in $L^{1}$, along a subsequence. We can also arrange that $\lambda_{i}\rightarrow \lambda$ for some $\lambda \in [0,1]$. A standard convergence analysis permits us to pass to the limit in relations $(a)'-(d)'$, and thereby arrive at the assertions of Thm. \ref{main}. 
%
\vspace{0.1 in}

\noindent
{\bf Step 5:} {\it Suppose the assertions of Thm. \ref{main} are valid under (H1)-(H3), (A0)-(A2). Then they are valid under (H1)-(H3), (A0) and (A1).}
\ \\

\noindent
Assume that the assertions of Thm. \ref{main} are valid under (H1)-(H3), (A0)-(A2). Suppose $(\bar x(.), \bar u(.))$ is an $L^{\infty}$ local minimizer when the hypotheses are satisfied, with the possible exception of  (A2). For $i=1,2,\ldots$, define family of functions
\begin{eqnarray}
\nonumber 
&&{\cal U}_{i}\,:=\, \{u(.) \mbox{ is a selector of $U(.)$}\,|\, k(t,\{u(t-h_{k})\}) +
\\
&&
\label{new_U_1}
\hspace{0.4 in}
 |f(t,\{\bar x(t-h_{k})\},\{u(t-h_{k})\} )|
\leq
 k(t,\{\bar u(t-h_{k})\}) + |\dot{\bar{x}}(t))\} )+ i \}
\end{eqnarray}
For each $i$, let $U_{i}(.):[S,T] \leadsto \R^{m}$ be the multifunction defined, ${\cal L}$, a.e. by the condition 
\begin{equation}
\label{new_U_2}
\mbox{Gr}\,U_{i}(.)= {\cal U}_{i}. 
\end{equation}
We note that
\begin{equation}
\label{approx}
\bar u(t) \in U_{1}(t) \subset \ldots U_{2}(t)\subset \ldots \mbox{ and }  \cup_{i}\,U_{i}(t)= U(t) \mbox{, a.e.}
\end{equation}

\noindent
For each $i$, $(\bar x(.),\bar u(.))$ continues to be an $L^\infty$ local minimizer for (P$_{i})$ which is the modification of (P) in which $U_{i}(.)$ replaces $U(.)$. Because the data for (P$_{i}$) satisfies (A2), the assertions of Thm. \ref{main} are available to us: they yield (for each $i$) a cost multiplier $\lambda_{i}\geq 0$ and a costate arc $p^{i}(.)$, with decomposition $p^{i}(.)=\sum_{k}p^{i}_{k}(.)$, such that 
$\lambda_{i}+ ||p^{i}(.)||_{L^{\infty}}=1$ and (b)-(d) of Thm. \ref{main} are satisfied (when $(\lambda_{i},\{p_{k}^{i}(.)\}, p^{i}(.))$ replaces $(\lambda,\{p_{k}(.)\}, p(.)$)).  Extracting subsequences we can arrange that $p^{i}_{j}(.) \rightarrow p_{j}(.)$, for each $j$ and  $p^{i}(.) \rightarrow p(.)$, uniformly,  as $i \rightarrow \infty$. A standard convergence analysis establishes that conditions (a), (b) and (d) are satisfied when $(\lambda,\{p_{k}(.)\},p(.))$  replaces  $(\lambda_{i},\{p_{k}^{i}(.)\}, p^{i}(.))$  and $\lambda_{i}+ ||p^{i}(.)||_{L^{\infty}}=1$.  However condition (c) is satisfied only in the weaker form:
\begin{eqnarray*}
&&\int_{[S,T]}\left(m(t,\{u(t-h_{k})\}) - m(t,\{\bar u(t-h_{k})\})\right) dt
\, \leq \, 0
%
\end{eqnarray*} 
for all control functions $u(.)$ that are selectors of $U_{i}(.)$ for $i$ sufficiently large, where
$$
m(t,\{u_{k}\}):=(p\cdot f-\lambda \tilde L)(t,\{\bar x(t-h_{k})\},\{u_{k}\})\;.
$$
We must validate (c) when  $u(.)$ is an arbitrary selector of $U(.)$ such that  $m(t,\{u(t-h_{k})\})$ is integrable. For this purpose, define, for each integer $\ell$,
$$
S_{\ell}:= \{t \in [S,T]\,|\, |  
m(t,\{u(t-h_{k})\})-
    m(t,\{\bar u(t-h_{k})\})| \geq \ell  \}
$$
By `integrability',
 meas$\{\, S_{\ell}\} \rightarrow 0$ as $\ell \rightarrow \infty$. For each $\ell$, define the subsets $A^{\ell}_{j} \subset [S,T]$, $j=0,\ldots, K$, in which $K$ is the integer $K := \mbox{int}\, (T-S)/ h_{1}$, recursively: $A^{\ell}_{0}= S_{\ell}$ and 
$A^{\ell}_{j}= ((A^{\ell}_{j-1}+ h_{1})\cap [S,T])\cup \ldots \cup ((A^{\ell}_{j-1}+ h_{N})\cap [S,T])$, $j=1,\ldots, N$.
Now write $A^{\ell}:= \cup_{j=0}^{K} A^{\ell}_{j}$. Define
$$
\tilde u(t) := \bar u(t) \chi_{A^{\ell}}(t)\,+\,u(t) \chi_{[S,T] \backslash  A^{\ell}}(t) \,.
$$
We can deduce from the special structure of $A^{\ell}$ that $\tilde u(.)$ is a selector of $U_{i}(.)$ for $i$ sufficiently large, and
$$
\mbox{`$t \in A^{\ell}$' $\implies$ `$u(t-h_{k})\in A^{\ell}, k=0,\ldots N$' 
and  `$t \notin A^{\ell}$' $\implies$ `$u(t-h_{k})\notin A^{\ell}, k=0,\ldots N$'}
$$
It follows that, for each $\ell$, 
$$\int_{[S,T]\cap A^{\ell}}\left(m(t,\{u(t-h_{k})\}) - m(t,\{\bar u(t-h_{k})\})\right) dt
\, \leq \, 0\,.
$$
Passing to the limit as $\ell \rightarrow \infty$, using the fact that meas$\,\{A^{\ell}\} \leq N^{K} \rightarrow 0$, we arrive at $\int_{[S,T]}\left(m(t,\{u(t-h_{k})\}) - m(t,\{\bar u(t-h_{k})\})\right) dt\, \leq \, 0\,$. We have shown that the assertions of Thm. \ref{main} are valid under (H1)-(H3), (A0)-(A1). 
\ \\

\noindent
Now suppose $(\bar x(.), \bar u(.))$ is an $L^{\infty}$ local minimizer under (H1)-(H3), but when either (A0) or (A1) are possibly violated.
 Then $(\bar y(.), \bar z(.)):[S-h,T]\rightarrow \R^{n+1}, \bar v(.):[S-h,T]\rightarrow \R^{2m +n})$ is an $L^{\infty}$ local minimizer for
the reformulated problem
$$
(P_r)
\left\{
 \begin{array}{l}
\mbox{Minimize } \; 
\displaystyle{
g(y(S-h),y(T)) + z(T)
}

\\
 \mbox{over  $(y(.),z(.)) \in W^{1,1} ([S-h,T];\R^{n+1})$ and measurable 
$u(.):[S-h,T] \rightarrow \R^{2m+n}$,} 
\\
\mbox{such that}
\\
 (\dot y(t),\dot z(t))= (\phi,M) ( t,\{ y(s-h_{k} )\}, \{v(s-h_{k})\})
 )\mbox{ a.e. }t\in [S-h,T],
\\
v(t)\in V(t)\mbox{ a.e. }t\in[S-h,T],
\\
((y(S-h),y(T))\in \R^{+}\times C,\, z(S-h)\geq 0\,.
\end{array}
\right.
$$
where
$
\bar y(t):= 
\bar x(S) \chi_{[S-h,S]}(t)+ \bar x(t) \chi_{[S,T]}(t),
\bar z(t):= \int_{[S,S\wedge t]}M( ( s,\{ \bar x (s-h_{k} )\}, \{v(s-h_{k})\})
 ))ds
$ %

\noindent
$
\mbox{ and }\,
\bar v(t):=
(0,\bar d(t)) \chi_{[S-h,S]}(t)+ (\bar u, 0) \chi_{[S,T]}(t)\,.
%
$
\vspace{0.1 in}

\noindent
The underlying time interval is now $[S-h,T]$ and  $(\phi,M)(.\,.)$ and $V(.)$ are:
$$
\left\{
\begin{array}{l}
\phi(t, \{y_{k}\}, \{v_{k}=(u_{k},d_{k})\} )
\,:=\,  0\, \chi_{[S-h,S]}(t) \,+\,f(t, \{y_{k}\}, \{u_{k}\}; \{d_{k}\})\chi_{[S,T]}(t)
\\
M(t, \{y_{k}\}, \{v_{k}=(u_{k},d_{k})\} )
\,:=\,  \Lambda(t,d_{0})\, \chi_{[S-h,S]}(t) \,+\,L(t, \{y_{k}\}, \{u_{k}\};  \{d_{k}\})\chi_{[S,T]}(t)
\\
V(t) \,:=\, (\{0\} \times D(t)) \chi_{[S-h,S]}(t) +  ( U(t) \times \{0\}) \chi_{[S,T]}(t)\,.
\end{array}
\right.
$$
Notice that the cost integrands have been eliminated by `state augmentation' and  the `old' initial data $d(.)$ has been absorbed into the `new' control $v(.)$.  There is no need for `new' initial data because $((f,L)(t,.\,.))\chi_{[S,T]}(t)\equiv(0,0)$  for $t \in [S-h,S]$.) The data for the reformulated problem continues to satisfy (H1)-(H3), now with reference to $((\bar y(.), \bar z(.)), \bar v(.))$, but also the additional conditions.
Application the special case of Thm. \ref{main}, in which (A0) and(A1) are  satisfied,
yields the conditions listed in Thm. \ref{main}, foro $L^{\infty}$ local minimizer $(\bar x(.), \bar u(.), \bar d(.))$, in the original problem.
\ \\

\noindent
Note that condition (b) in the theorem statement and the defining relations for the $p_{k}(.)$'s and $p(.)$ do indeed imply the `advance' differential inclusion  condition (b$^{*}$) for $p(.)$. 
This is because each $p_{k}(.)$ satisfies the relation $-\dot p_{k}(.) \in \mbox{co}\,\tilde \partial_{x_k} (p\cdot f -\lambda L)(.\,.)$, involving the projected limiting subdifferential $\tilde \partial_{x_k} (p \cdot f -\lambda L)$.  (b$^{*}$) now follows from the definition (\ref{p_eqn}) of $p(.)$ and condition  (\ref{p_comp}), which 
implies $-\dot p (t)= -\sum_{k=0}^{N} \dot{p}_{k}(t)$ a.e..  
\vspace{0.1 in}
\ \\

\noindent
Finally, we justify including the `pointwise' version of the Weierstrass condition (b*) in the assertions of Thm. \ref{main}. By consideration of the reformulated problem $(P_{r})$ of Step 5, we may restrict attention to the case when (A0) holds  and $L(.\,.)\equiv 0$. A simple analysis using `needle variations' permits us to deduce from the integral Weierstrass condition (b) that, for each $i$,
\begin{equation}
\label{special_W}
{\cal H}_{\lambda}(t,\bar u(t); \bar x(.), \bar u(.), p(.))= \underset{u \in U^{i}(t)}{\max}
{\cal H}_{\lambda}(t,u; \bar x(.), \bar u(.), p(.)) \mbox{ a.e. }t \in [S,T]\,. 
\end{equation}
Here $U_{i}(.)$ is the multifunction defined by (\ref{new_U_1}) and 
(\ref{new_U_2}). (${\cal H}_{\lambda}(t,u;.\,.)$ is the function (\ref{Hamiltonian}); we have suppressed reference to  $d(.)$ in the notation for ${\cal H}_{\lambda}(t,u;.\,.)$, since (H0) is assumed to be satisfied.) But we may use (\ref{approx}) to show that (\ref{special_W}) remains valid, when we substitute $U(t)$ for $U^{i}(t)$. We have confirmed the validity of the pointwise Weierstrass condition.

\section{Proof of Thm. \ref{thm3_1}}
To proof the PMP for free end-time problems, we initally assume that the following additional hypotheses are satisfied
\begin{itemize}
\item[(A0):]
$f(t,\{x_{k} \}, \{u_{k} \};\{d_{k}\})$ does not depend on the initial data $\{d_{k}\}$ and $\Lambda(.,.)\equiv 0$.
\item[]
(When (A0) is satisfied, we write $f(t,\{x_{k} \}, \{u_{k} \})$ in place of $f(t,\{x_{k} \}, \{u_{k} \};\{d_{k}\})$.)
\item[(A1):] 
$L(.\,.) \equiv 0$. 
\item[(A2):] There exist integrable functions $c_{0}(.):[S,\bar T + \epsilon] \rightarrow \R$ and  $k_{0}(.):[S,T+ \epsilon]\rightarrow \R$ such that, for all selectors  $u(.)$ $U(.)$ and a.e. $t \in [S,\bar T + \epsilon]$, the mapping
$\{x_{k}\} \rightarrow f(t,\{x_{k}\},\{u(t-h_{k}) \})$
is $c_{0}(t)$ bounded  and $k_{0}(t)$-Lipschitz continuous on  $\R^{n \times (N+1)}$,
\vspace{0.05 in}
\end{itemize} 
The proof (under the additional hypotheses) proceeds in two steps. In the first step we consider the case when only the left endpoint of state trajectories is constrained. In the second, we show the  PMP for problems involving general endpoint constraints can be derived by applying it to a sequence of perturbed problems with free right endpoints (the case treated in Step 1), and passage to the limit.
\ \\

\noindent
 {\bf Step 1:} Let $(x^{*}(.),u^{*}(.),T^{*})$ be an $L^{\infty}$ be local minimizer the following problem which, when regarded as a special case of $(P_{FT})$, has  data satisfying (H1)-(H3), (A0) and (A2):
$$
(Q_{FT})
\left\{
\begin{array}{l}
\mbox{Minimize } \; g'(x(T),T)\,+\,\alpha \left( \int_{[S,T]} m(t,u(t))dt + |x(S)-x^{*}(S)| + |T-T^{*}|  \right) 
\\
\mbox{subject to }

\\
 \hspace{0.2 in} \dot x(t)= f(t,\{x(t-h_k)\},u(t)),\,  \mbox{ a.e. } t \in [S,T^{*}+ \epsilon ]
 \\
 \hspace{0.2 in} u(t) \in U(t)  \mbox{ a.e. } t \in [S,T^{*}+ \epsilon]\,,
\\
\hspace{0.2 in}x(S) \in C_{0},
\end{array}
\right.
$$
Here, $\alpha \geq 0$ is a given number, $m(.,.)$ is a given bounded, ${\cal L} \times {\cal B}$ measurable function and $g'(.,.)$ is a given Lipschitz continuous function. It is assumed that $T^{*}-S > h$.  
Our goal in this step is to prove the following necessary conditions: 
\ \\

\noindent
{\it There exist $p_{k}(.)\in W^{1,1}([S-h_{k},T^{*}];\R^{n})$, $k=0,\ldots, N$, satisfying (\ref{p_comp}) and $p(.)$ given by (\ref{p_eqn}) (when $T=T^{*}$), such that
\ \\

\noindent
(b)$\,'$: $\{-\dot p_{k}(t-h_{k})\}
\in \mbox{co}\, \partial_{x_{0},\ldots,x_{N}}\,p(t)\cdot f(t,\{x^*(t-h_k)\},u(t) ),
 \mbox{ a.e. } t \in [S,T^{*}]$,
\ \\
 %
%

\noindent
(c)$\,'$:   
$
(p\cdot f-\lambda m)(t,\{x^{*}_{i}(t-h_{k})\},u^{*}(t))\}) \,= \, \underset{u \in U(t)}{Max} (p\cdot f-\lambda m)(t,\{\bar x_{i}(t-h_{k})\},u)  \mbox{ a.e. }t\in [S,T^{*}],
$
\ \\

\noindent
(d)$\,'$: $(-p(T^{*}),\xi) \in  \partial  g'( x^{*}(T),T^*)+  2\alpha \B, \ \  p(S) \in \alpha \B +N_{C_{0}}(x^{*}(S))$, for some $\xi \in \R$ s.t.
$$
\xi \in \underset{t\rightarrow T^{*}}{\mbox{ess}}\;
\left\{
 \max_{u \in U(t)}\;  p(T^*)\cdot f(t,x^{*}(T^{*}-h_{0}),\ldots, 
  x^{*}(T^{*} -h_{N}), u)\right\}\,.
$$
}
%
We may assume, without loss of generality, that $g'(.,.)$ is continuously differentiable, not merely Lipschitz continuous. This is because, if $ g^{\prime}(.,.)$ were Lipschitz continuous, we could replace it by its $i$-quadratic inf convolution $ g^{\prime}_{i}(.,.)$, for  $i=1.2.\ldots$. For each $i$, $(x^{*}(.),u^{*}(.),T^{*})$ is a $\gamma_i$-minimizer of the perturbed problem, for some sequence $\gamma_i\downarrow0$. We may then apply Ekeland's theorem with the following metric on the space of processes:
\begin{eqnarray}
\label{eke_ft}
&&
 d_{{\cal E}}((x'(.),  u'(.),T') ,(x(.),  u(.),T) ) \;=\;
 |x'(S)- x(S)| \;+\; |T'-T|
\\
&& 
\nonumber
\hspace{2.0 in}
+\mbox{ meas } \{t \in [S,T\wedge T']\,|\, u'(t) \not= u(t) \}.
\end{eqnarray}
We thereby arrive at a minimizer $(x_{i}(.), u_{i}(.),T_i)$ for a perturbed version of $(Q_{FT})$, in which $\tilde g(.,.)$ is replaced by $ g^{\prime}_{i}(.,.)$. The element $(x_{i}(.), u_{i}(.),T_i)$ remains a minimizer when $g^{\prime}_{i}(.,.)$ is replaced by a quadratic function (plus a perturbation term) that majorizes $\tilde g_{i}(.,.)$ and coincides at 
$(T_{i}, x_{i}(T_{i}))$. We have arrived at in this way a problem again with the structure of $(Q_{FT})$, but in which $g(.,.)$ has been replaced by a continuously differentiable function. (It is precisely in anticipation of stage of the analysis that the cost in $(Q_{FT})$ is furnished with the `perturbation' term $\alpha\,(.\,.)$.) The special case of the Maximum Principle (with smooth terminal state and time cost) can be applied, with reference to $(x_{i}(.), u_{i}(.),T_i)$. We obtain the asserted necessary conditions (for Lipschitz continuous $\tilde g(.,.)$) in the limit as $i \rightarrow \infty$. (The details are very similar to those followed in Step  2 of the proof of Thm. \ref{main}.) 
\ \\

\noindent
So we assume $g^{\prime}(.,.)$ is continuously differentiable. For $T$ fixed at $T=T^{*}$, $(x^*(.), u^*(.))$ is an $L^{\infty}$ local minimizer for the corresponding fixed time problem.We then deduce from Thm. \ref{main} existence of functions $\{p_{k}(.)\}$ and $p(.)$ satisfying (\ref{p_comp}) and (\ref{p_eqn}), and conditions (b)$'$-(d)$'$. We also know that $p(S) \in \alpha \B +N_{C_{0}}(x^{*}(S))$. It remains to validate the transversality condition involving the optimal end-time. Take  $\delta \in (0,h_{1})$ such that $ T^*-\delta \geq S$. ($h_{1}$ is the shortest time delays period.)  Take also any $\gamma >0$ and let $v^{*}(.)$  be a measurable selector on $[T^{*},T^{*}+ \delta]$ of the multifunction
\begin{eqnarray*}
&&
t \rightarrow \{ u \in U(t) \,|\,   p(T^{*})\cdot f (t,x(T^*-h_{0}),\ldots,  x(T^*-h_{N}),u) \geq 
\\
&& \hspace{2.5 in} \underset{u'  \in U(t)}{\max}
p(T^{*})\cdot f(t,x(T^*-h_{0}),\ldots,  x(T^*-h_{N}),u')  -\gamma\}.
\end{eqnarray*}
Now consider an extension of $u^{*}(.):[S,T^*] \rightarrow \R^{m}$ to $[S,T^*+\delta]$, obtained by setting $u^{*}(t)= v^{*}(t)$ for $t \in (T^{*},T^{*}+ \delta]$. Extend also $x^{*}(.)$ to $[S,T^*+\delta]$, as the state trajectory corresponding to the extended control function $u^{*}(.)$ and the original initial state $x^{*}(S)$. Since $(x^{*}(.),u^{*}(.),T^{*})$ is an $L^{\infty}$ local minimizer, we have
\begin{equation}
\label{inequalities}
\tilde g( x^{*}(T^*- \delta),T^*-\delta)+2\alpha\delta \geq \tilde g( x^{*}(T^{*}),T^{*}) \mbox{ and } \  \tilde g( x^{*}(T^*+ \delta),T^*+\delta) +2\alpha\delta\geq \tilde g( x^{*}(T^*),T^*)
\end{equation}
for all $\delta >0$ sufficiently small. Since $\tilde g(.,.)$ is continuously differentiable and $\nabla_{x}\, \tilde g(x^{*}(T^{*}),T^*)= -p(T^{*})$, we can deduce from the second inequality in (\ref{inequalities}) that
\begin{eqnarray*}
&& -2\alpha  \leq 
\\
&&
\nabla_{T} \tilde g( x^{*}(T^{*}),T^{*})- \delta^{-1}\left[ \int_{ [T^{*},T^{*}+ \delta]}  p( T^{*})\cdot  f(s, \{x^{*}(T^{*}-h_{k})\}, u^{*}(s))ds \right] + o(\delta)
\\
&&
\leq \nabla_{T} \tilde g( x^{*}(T^{*}),T^{*})
\\
&&
 - \underset{s \in [T^{*}-\delta, T^{*}+\delta ]}{\mbox{ess inf}} 
\left[ 
 \underset{u \in U(s)}{\mbox{sup}}  p( T^{*})\cdot  f(s, \{x^{*}(T^{*}-h_{k})\}, u) 
\right] 
+  \gamma  +  o(\delta)\,.
\end{eqnarray*}
(in which $o(.)$ is some increasing function such that $o(\delta') \rightarrow 0$ as $\delta' \downarrow 0$). (We make use of the fact that $T^{*} \geq S+h$ and the essential boundedness of $k_{f}(.)$ and $c_{f}(.)$ on a neighbourhood of $T^{*}$ to justify these relationships.)


\ \\
\noindent
Exploiting the first inequality in (\ref{inequalities}), we arrive at
\begin{eqnarray*}
&& -2\alpha  \leq  
\\
&&
-\nabla_{T} \tilde g( x^{*}(T^{*}),T^*) + \delta^{-1}\left[ \int_{ [T^{*}-\delta,T^{*}]}  p( T^{*})\cdot  f(s,\{ x^{*}(T^{*}-h_{k})\}), u^{*}(s))ds \right] + o_{1}(\delta)
\\
&&
\leq -\nabla_{T} \tilde g( x^{*}(T^{*}),T^*)
\\
&&
 + \underset{s \in [T^{*}-\delta, T^{*} +\delta]}{\mbox{ess sup}} 
\left[ 
 \underset{u \in U(s)}{\mbox{sup}}  p( T^{*})\cdot  f(s, \{x^{*}(T^{*}-h_{k})\}, u)
\right] 
+  o_{1}(\delta)\,,
\end{eqnarray*}
(for some increasing $o_{1}(.)$ such that $\lim_{s \downarrow 0} o_{1}(s)=0)$.
It follows 
\begin{multline*}
 \underset{s \in [T^{*}-\delta, T^{*}+\delta ]}{\mbox{ess inf}} 
\left[ 
 \underset{u \in U(s)}{\mbox{sup}}  p( T^{*})\cdot  f(s, \{x^{*}(T^{*}-h_{k})\}, u)
\right] 
-  \gamma  -  o(\delta)-2\alpha
\\
\leq \, \nabla_{T} \tilde g( x^{*}(T^{*}),T^*) \, \leq \,
\\
 \underset{s \in [T^{*}-\delta, T^{*} +\delta]}{\mbox{ess sup}} \left[ \underset{u \in U(s)}{\mbox{sup}}  p( T^{*})\cdot  f(s,\{ x^{*}(T^{*}-h_{k})\}, u)
\right] 
+  o_{1}(\delta)+2\alpha\,.
\end{multline*}
Since $\gamma$ and $\delta$ are arbitrary positive numbers we conclude,
$$
\nabla_{T} \tilde g(T^{*}, x^{*}(T^{*}))\in \underset{t\rightarrow T^{*}}{\mbox{ess}}\;
\left\{
 \max_{u \in U(t)}\;  p(T^*)\cdot f(t,x^{*}(T^{*}-h_{0}),\ldots, 
  x^{*}(T^{*} -h_{N}), u)\right\}\;+\; 2\alpha\B\,.
$$
as required to complete the derivation of the necessary conditions for the free end-time problem of step 1.
\ \\

\noindent
{\bf Step 2:} Let $(\bar x(.),\bar u(.), \bar T)$ be a minimizer for  $(P_{FT})$. Assume that (H1)-(H3) and (A0)-(A2) are satisfied. 
We show that the assertions of Thm. \ref{thm3_1} are valid.
(Now the endpoint cost function is $g( x(S),x(T),T)$ and the endpoint constraint is  $(x(S),x(T),T) \in C$.) 
%
\ \\

\noindent
Take $\gamma_{i} \downarrow 0$. For $i=1,2\ldots$, consider the problem with $(n+n)$-dimensional state vector $(z,x)$: 
$$
(P^{1}_{i})\left\{ 
\begin{array}{l}
\mbox{Minimize } 
\quad\max\{ g(z(T), x(T),T)- g(\bar z(\bar T),\bar x(\bar T),\bar T) + \gamma_{i}\,, d_{C}(z(T),x(T),T) \}
\mbox{ s.t. }
\\
\\
\dot z(t) = 0,\,
\dot x(t)= f(t,\{x(t-h_{k})\}, u(t))
\\
\\
(z(S),x(S))\in \tilde C:= \{(z,x) \in \R^{n}\times \R^{n}\,|\, z=x\}\,.
\\
\\
||x(.)-\bar x(.)||_{L^{\infty}(S,T \wedge \bar T)} + |T-\bar T| \leq \bar \epsilon
\end{array}
\right.
$$
%
We see that, for $\bar \epsilon$ sufficiently small,
 $(\bar z(.)\equiv \bar x(S), \bar x(.),\bar u(.), \bar T)$ is an $\gamma_{i}$-minimizer for each $i$. From this point the analysis follows the same path as that in Step 4 of the proof of he necessary conditions for the fixed time problem. That is to say, we use Ekeland's Theorem to establish the existence of a new process $(z_i, x_{i}(.),u_{i}(.), T_{i})$, `close' to $(\bar z, \bar x(.),\bar u(.), \bar T)$ for large $i$, that is a minimizer for a perturbed problem. The perturbed problem has the special structure for which Step 1 provides necessary conditions. We apply the earlier derived necessary conditions, and obtain the assertions of the theorem in the limit as $i \rightarrow \infty$. The difference with the earlier `fixed time' analysis is that we now use the metric (\ref{eke_ft}) on free end-time processes, in place of the earlier metric on fixed time processes. 
 \ \\
 
 \noindent
So far, our proof of the Thm. \ref{thm3_1} covers only the special case  when the extra hypotheses (A0)-(A2). To show that the assertions of the Thm. remain valid when we remove (A0)-(A2) by techniques essentially the same as those employed in Step 5 of the proof of the fixed time PMP, based on the state augmentation and  domain extension.
\section{Appendix} 
The purpose of this Appendix is to prove condition (c)$^ \prime$ in Lemma \ref{lemma_4_3}, namely the `integral Weierstrass condition' 
for problem 
$(\tilde P_{i})$. 
Take an arbitrary selector $u(.)$ of $U(.)$. We must show
\ \\

\noindent
(c)$^{\prime \prime}$: $\int_{[S,T]}Q_{i}(t, \{ u(t-h_{k})\}) dt \leq \int_{[S,T]}Q_{i}(t, \{ u^{*}(t-h_{k})\}) dt$
\ \\

\noindent
where
$Q_{i}(t,\{u_{k}\}):= 
p(t)\cdot f(t, \{y^{*}_{k}(t)\}, \{u_{k}\}) -\tilde L((t,u_{0})- \gamma_{i}^{\frac{1}{2} } m_{i}(t, u_{0})\,.
$
(Note that the integrands are integrable in consequence of the supplementary hypothesis (A2).)
\ \\

\noindent
We state for future use the following generalization of Hurwitz's theorem, concerning the simultaneous approximation of a finite collection of positive real numbers by rational numbers.  A proof is appears in \cite[Lemma 4.2]{warga3} of this classical result \cite[Thm. 200]{hardy}. ($\Z^{+}$ denotes the positive integers.)
\begin{lem} 
\label{lemma_approx}
Take positive numbers $h_{1},\ldots, h_{N}$. Then there exist a sequence $n_{j}\rightarrow \infty$ in $\Z^{+}$ and a sequence of   $\epsilon_{j} \downarrow 0$ in $\R$ such that
$$
\max_{k=1,\ldots,N}\left\{ \underset{m \in \Z^{+}}{\min}
\; |n_{j}h_{k}-m| \right\}\,\leq \, \epsilon_{j}, \mbox{ for  } 
j=1,2,\ldots 
$$
\end{lem}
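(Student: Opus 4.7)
The plan is to obtain the conclusion from the classical simultaneous Diophantine approximation theorem of Dirichlet: given real numbers $h_1,\ldots,h_N$ and any integer $Q\ge 1$, there exist $n\in\Z^+$ with $1\le n\le Q^N$ and integers $m_1,\ldots,m_N$ such that $|nh_k-m_k|\le 1/Q$ for each $k=1,\ldots,N$. This is proved by a standard pigeonhole argument on the $Q^N$ cells of the unit cube $[0,1)^N$ and the $Q^N+1$ vectors $(\{nh_1\},\ldots,\{nh_N\})$ for $n=0,1,\ldots,Q^N$.

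With this tool in hand, I would proceed as follows. For each $j\ge 1$ apply Dirichlet's theorem with $Q=j$ to obtain a positive integer $n_j$ (with $1\le n_j\le j^N$) and integers $m_{j,1},\ldots,m_{j,N}$ satisfying $|n_jh_k-m_{j,k}|\le 1/j$ for $k=1,\ldots,N$. Set $\tilde\epsilon_j:=1/j$. Then the desired inequality
\[
\max_{k=1,\ldots,N}\Big\{\min_{m\in\Z^+}|n_jh_k-m|\Big\}\le \tilde\epsilon_j
\]
holds for all $j$ sufficiently large (once $n_jh_k>1/2$, which holds for large $j$ since $h_k>0$ and we can, if necessary, discard finitely many initial terms).

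The only non-routine point is ensuring $n_j\to\infty$. If the sequence $\{n_j\}$ is unbounded, pass to a subsequence (and along it to a monotone subsequence of $\tilde\epsilon_j$) to guarantee $n_j\to\infty$ while retaining $\tilde\epsilon_j\downarrow 0$. If instead $\{n_j\}$ is bounded, some value $n^*\in\Z^+$ is attained infinitely often; along that subsequence $|n^*h_k-m_{j,k}|\le 1/j\to 0$ forces $n^*h_k\in\Z$ for every $k$, so all $h_k$ are rational with common denominator dividing $n^*$. In this degenerate case, redefine $n_j:=jn^*$ so that $n_jh_k\in\Z^+$ exactly for every $k$, and take $\epsilon_j:=1/j$; the hypotheses of the lemma are then trivially satisfied.

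The main obstacle is thus the dichotomy above: Dirichlet's theorem delivers a small approximation error but only an \emph{a priori} bound $n_j\le j^N$ on the denominator, so some argument is needed to exclude a trivial bounded sequence of denominators. Handling the rational and irrational cases separately, as above, resolves this and yields the two promised sequences $n_j\to\infty$ in $\Z^+$ and $\epsilon_j\downarrow 0$ in $\R$.
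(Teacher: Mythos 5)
Your proof is correct and follows essentially the same route as the paper, which does not argue the lemma directly but simply cites the classical simultaneous (Dirichlet/pigeonhole) approximation theorem — Hardy--Wright, Thm.~200, via Warga--Zhu. The two details you supply beyond the citation (forcing $n_j\to\infty$ via the rational/irrational dichotomy, and noting that the approximating integers are automatically positive for large $j$ because $n_jh_k\ge h_k>0$) are exactly the routine points the citation glosses over, and you handle them correctly.
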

\noindent
Take any $K >0$ and define the set
$$
A^{K}:=\{t \in [S,T]\,|\,  |c_{0}(t)| \leq K   \}\,.
$$
Here $c_{0}(.) \in L^{1}$ is as in supplementary hypothesis (A2). Define
$
\tilde u(t)=\,
\left\{
\begin{array}{ll}
u(t)& \mbox{ if }t \in [S,T] \backslash A^{K},
\\
 u^{*}(t) & \mbox{ if } t \in A^{K}  \,.
\end{array}
\right.
$
\ \\

\noindent
 Take $\lambda \in (0,1)$. For $j=1,2, \ldots$ let $n_{j}$ and $\epsilon_{j}$ be as in Lemma \ref{lemma_approx} and
$$
 A_{j,\lambda}:= \bigcap_{k=0}^{n_{j}-1} \left[S+ \frac{k  (T-S)}{n_{j}}, S+\frac{(k +\lambda)(T-S)}{n_{j}}\right]\,.
$$
Now define the selector $u_{j,\lambda}(.):[S,T] \rightarrow \R^{m}$ be
$$
u_{j,\lambda}(.)\,=\,
\left\{
\begin{array}{ll}
\tilde u(t)& \mbox{ if $t \in A_{j,\lambda}$ },
\\
 u^{*}(t) & \mbox{ if } t \in [S,T] \,\backslash \, A_{j,\lambda}  \,.
\end{array}
\right.
$$
In view of Lemma \ref{lemma_approx}, 
\begin{eqnarray*}
&&\{ u_{j,\lambda}(t-h_{k})\}
 \,=\, (\{\tilde u(t-h_{k})\})\chi_{A_{j,\lambda}}(t) + (\{ u^{*}(t-h_{k})\}) )
\chi_{([0,T] \backslash A_{j,\lambda})}(t)
\end{eqnarray*}
for all $t \in [S,T] \, \backslash \, {\cal B}_{j, \lambda}$, where 
\vspace{0.1 in}

${\cal B}_{j, \lambda}\,:=\left(\left\{S, S+\frac{1}{n_{j}}(T-S), \ldots,S+ \frac{n_{j}}{n_{j}}(T-S) 
, \right. \right.$

$
\left.\left.
\, S+\frac{\lambda}{n_{j}}(T-S), S+ \frac{1+\lambda}{n_{j}}(T-S),\ldots, S+\frac{(n_{j}-1) +\lambda}{n_{j}} (T-S) \right\} \,+\, 2\epsilon_{j}(T-S)(-1,+1) \right)
\, \cap\, [S,T]\,.
$
\vspace{0.1 in}

\noindent Note that $\sigma_{j}:= \mbox{meas}\,\{{\cal B}_{j,\lambda}\} \leq 4 \epsilon_{j} \times \frac{2n_{i}+1}{n_{i}}|T-S| \rightarrow 0$, as $j \rightarrow 0$.
\ \\

\noindent
Now write $x_{j,\lambda}(.)$ for the solution to the dynamical equation in $(\tilde P_{i})$, when $u(.)= u_{j, \lambda}(.)$, $\{y_{k}(.)\}= \{y^{*}_{k}(.)\}$,with initial value $x_{j,\lambda}(S)= x^{*}(S)$. Since meas$\,\{A_{j,\lambda}\}= \lambda$ and since $c_{0}(.)$ is essentially bounded on $[S,T] \backslash A^{K}$, we have
$$
||x_{j,\lambda}(.) - x^{*} (.) ||_{L^{\infty}}\,\leq \,  \gamma(K) \,(\mbox{meas}\{A_{j, \lambda}\}+  \mbox{meas}\{{\cal B}_{j,\lambda}\})\, \leq\, \gamma(K) (\lambda + \sigma_{j})
$$
for some number $\gamma(K)$ that depends on $K$ but is independent of $j$. 
According to (\ref{estimate}),
\begin{eqnarray*}
&&\int_{[S,T]} \left[ 
Q_{i}(t,\{ u_{j,\lambda}(t-h_{k})\}) -
Q_{i}(t, u^{*}\{(t-h_{k})\})
\right]dt \,\leq\,
\\
&& \hspace{2.3 in}  \theta(||x_{j, \lambda}(.)-x^{*}(.)||_{L^{\infty}})+||k_{0}(.)||_{L^{1}} ||x_{j, \lambda}(.)-x^{*}(.)||^{2}_{L^{\infty}}\,.
\end{eqnarray*}
Noting that $\tilde u(.)$ coincides with $u^{*}(.)$ on $A^{K}$, we see that
\begin{eqnarray*}
\hspace{-0.4 in}&&\int_{[S,T]} 
Q_{i}(t, \{u_{j,\lambda}(t-h_{k})\})dt \, \leq \, 
\int_{[S,T]\backslash A^{K}} 
\left[ Q_{i}(t, \{u(t-h_{k}\}),\,
Q_{i}(t,  \{ u^{*}(t-h_{k}\})
\right] q_{j,\lambda}(t)dt \,+\, \gamma^{\prime}(K) \sigma_{j}
\,.
\end{eqnarray*}
for some number $\gamma^{\prime}(K)>0$ independent of $j$, in which
$$
q_{j,\lambda}(t):= 
\left[
\begin{array}{cc} 1 &0\end{array}
\right]^{T} 
\chi_{A_{j,\lambda}}(t)+  \left[\begin{array}{cc}  0 & 1\end{array}\right]^{T} \chi_{[0,T] \backslash A_{j,\lambda}}(t)\,.
$$
The sequence $\{q_{j,\lambda}(.)\}$ is equi-integrable and uniformly bounded and therefore has a weak limit in $L^{1}$. By consideration of $C^{1}$ test functions, we can easily show that
$q_{j, \lambda}(.) \,\rightarrow \, 
\left[\begin{array}{cc}  \lambda & 1-\lambda \end{array}\right]^{T}
$
$\mbox{weakly$^{*}$ in $L^{\infty}$ as $ j \rightarrow \infty$}
$, and so
\begin{eqnarray*}
&&\int_{[S,T]\backslash A^{K}} 
\left[ Q_{i}(t, \{u(t-h_{k})\}),\,
 Q_{i}(t,  \{ u^{*}(t-h_{k})\})
\right] q_{j,\lambda}(t)dt \, 
\\
&&
\hspace{0.3 in}\rightarrow \;
\int_{[S,T]\backslash A^{K}}  \left[\lambda Q_{i}(t, \{u(t-h_{k})\})+ (1-\lambda)
Q_{i}(t,  \{u^{*}(t-h_{k})\})
\right]dt \, 
\,.
\end{eqnarray*}
as $j \rightarrow \infty$. We deduce from the preceding relations that
\begin{eqnarray*}
\hspace{-0.2 in}&&\lambda \,\int_{[S,T]\backslash A^{K}} 
\left[ Q_{i}(t, \{ u(t-h_{k})\}) -
 Q_{i}(t, \{ u^{*}(t-h_{k})\})
\right] dt \, 
\\
\hspace{-0.2 in}
&&
\hspace{0.3 in}\leq
\underset{j \rightarrow \infty}{\mbox{lim sup}}\left[ \bar k 
\sigma_{j}
+
\theta(\bar k (\lambda+ \sigma_{j})) - (\bar k (\lambda + \sigma_{j}))^{2}||k_{0}(.)||_{L^{1}}  
\right]\,= \theta(\bar k \lambda  ) - (\bar k \lambda)^{2}||k_{0}(.)||_{L^{1}} \,,
\end{eqnarray*}
which is valid for any $\lambda \in (0,1]$. Dividing across by $\lambda$ and passing to the limit as $\lambda \downarrow 0$ gives
$$
\int_{[S,T]\backslash A^{K}} 
\left[ Q_{i}(t, \{ u(t-h_{k})\}) -
 Q_{i}(t, \{ u^{*}(t-h_{k})\})
\right] dt \, \leq\,  0\,.
$$
But the integrand here is an integrable function. Since meas$\,\{ A^{K} \}\rightarrow 0$ as $K \rightarrow \infty$, the inequality is valid when $[S,T]$ replaces $[S,T] \backslash A^{K}$. We have confirmed condition (c)$^{ \prime} $.
\section{Proof of Thm. \ref{thm3_1}}

{\bf Acknowledgement.} This work was funded by the European Union
under the 7th Framework Programme ``FP7-PEOPLE-2010-ITN'',
grant agreement number 264735-SADCO.

\end{document}